\documentclass[11pt]{amsart}
\usepackage[margin=1in]{geometry}
\usepackage{amsmath}
\usepackage{amsfonts,amsmath}
\usepackage{paralist}
\usepackage[colorlinks=true]{hyperref}
\hypersetup{urlcolor=blue, citecolor=red}
\numberwithin{equation}{section}

\newcommand{\po}{\partial\Omega}

\newcommand{\io}{\int_{\Omega}}
\newcommand{\ioT}{\int_{\Omega_{T}}}

\newcommand{\erb}{\eta(f;r;B_r(x_0)) }

\newcommand{\iot}{\int_{\Omega_{\tau}}}

\newcommand{\ot}{\Omega_T }

\newcommand{\ep}{\varepsilon}

\newcommand{\mdiv}{\textup{div}}
\newcommand{\q}{\mathbf{q}}
\newcommand{\qe}{\mathbf{q}^{(\varepsilon)}}
\newcommand{\uxo}{u_{x_1}}
\newcommand{\vxt}{v_{x_2}}
\newcommand{\vxo}{v_{x_1}}

\newcommand{\vp}{\varphi}

\newcommand{\mdet}{\textup{det}}

\newtheorem{theorem}{Theorem}[section]

\newtheorem{lemma}[theorem]{Lemma}

\theoremstyle{definition}

\title[a groundwater flow problem
] 
      {Global existence of strong solutions to a groundwater flow problem}

\author[Xiangsheng Xu]{}

\subjclass{Primary: 35B45, 35B65, 35M33, 35Q92.}
 \keywords{Groundwater flow, matrix decomposition, De Giorgi iteration method.
 		 }
 \email{xxu@math.msstate.edu}



\begin{document}
\maketitle

\centerline{\scshape Xiangsheng Xu}
\medskip
{\footnotesize
 \centerline{Department of Mathematics \& Statistics}
   \centerline{Mississippi State University}
   \centerline{ Mississippi State, MS 39762, USA}
} 

\bigskip

\begin{abstract}
In this paper we study the initial boundary value problem for the system $\Delta v= u_{x_1},\ u_t-\mbox{div}\left(\left((a|\mathbf{q}|+m)I+(b-a)\frac{\mathbf{q}\otimes\mathbf{q}}{|\mathbf{q}|}\right)\nabla u\right)=-\nabla u\cdot\mathbf{q}$,
where $\mathbf{q}=(-v_{x_2}, v_{x_1})^T$, $\mathbf{q}\otimes\mathbf{q}=\mathbf{q}\mathbf{q}^T$. This problem has been proposed as a model for a fluid flowing through a porous medium under the influence of gravity and hydrodynamic dispersion. For each $T>0$ we obtain a so-called strong solution $(v, u)$ in the function space $L^\infty(0,T; \left(W^{1,\infty}(\Omega)\right)^2)$, where $\Omega$ is a bounded domain in $\mathbb{R}^2$.
The key ingredient in our approach is the decomposition $A^2=\mbox{tr} (A)A-\mbox{det}(A) I$ for any $2\times 2$ symmetric matrix $A$.  By exploring this decomposition, we are able to derive an equation of parabolic type for the function $\left(\left((a|\mathbf{q}|+m)I+(b-a)\frac{\mathbf{q}\otimes\mathbf{q}}{|\mathbf{q}|}\right)\nabla u\cdot\nabla u\right)^j, j\geq 1$. With the aid of this equation we obtain a uniform bound for $\nabla u$. {\it Z. angew. Math. Phys.}, to appear.

\end{abstract}
\section{Introduction}

Let $\Omega$ be a bounded domain in the $x=(x_1,x_2)$ plane with boundary $\po$ and $T$ any positive number. We study the problem
\begin{eqnarray}
\Delta v&=& u_{x_1}\ \ \mbox{in $\ot\equiv\Omega\times(0,T)$},\label{gwe1}\\
u_t-\mdiv\left(\left((a|\mathbf{q}|+m)I+(b-a)\frac{\q\otimes\q}{|\q|}\right)\nabla u\right)&=& -\nabla u\cdot\q\ \ \ \mbox{in $\ot$},\label{gwe2}\\
\left((a|\mathbf{q}|+m)I+(b-a)\frac{\q\otimes\q}{|\q|}\right)\nabla u\cdot\nu&=&0\ \ \ \mbox{on $\Sigma_T\equiv\po\times(0,T)$},\label{gwe3}\\
v&=& 0\ \ \ \mbox{on $\Sigma_T$},\label{gqe4}\\
u(x,0)&=&u_0(x)\ \ \mbox{on $\Omega$},\label{gwe5}
\end{eqnarray}
where
\begin{equation}\label{qd}
\q=\left(\begin{array}{c}
-v_{x_2}\\
v_{x_1}
\end{array}\right),
\end{equation}
$\q\otimes\q=\q\q^T$, $I$ is the $2\times 2$ identity matrix, $a, b, m$ are positive numbers with $b>a$, and $\nu$ is the unit outward normal to the $\po$. 

This system arises in the description of the movement of a fluid of variable density
$u$ through a porous medium under the influence of gravity and hydrodynamic
dispersion \cite{CDL}. The first equation \eqref{gwe1} is derived from Darcy’s law, while the second equation \eqref{gwe2} describes the mass balance. See \cite{CDL,PS} for details.
In a slightly different form, problem \eqref{gwe1}-\eqref{gwe5} was studied by Su \cite{S} using classical
partial differential equation (PDE) methods. In \cite{CDL}  the problem was formulated as abstract evolution equations in Banach spaces. To describe the results there, we set
\begin{equation}\label{ddf}
D\equiv (a|\mathbf{q}|+m)I+(b-a)\frac{\q\otimes\q}{|\q|}
\end{equation}
If $D$ can be taken as a constant multiple of the identity matrix $I$, the resulting problem has a classical solution, while in the general case, 
only local existence of weak solutions in
$W^{1,p}(\Omega)$ for some $p> 1$ was obtained. The global existence was thereby left as an open problem.

The objective of this paper is to solve the problem left open in \cite{CDL}. Before we precisely state our result, we make some preliminary observations. 
By the definition of $\q$, we always have
\begin{equation}\label{diq}
\mdiv\q=0.
\end{equation}
Thus 
\begin{equation*}
\nabla u\cdot\q=\mdiv(u\q).
\end{equation*}
Moreover,
\begin{equation*}
\left|\frac{\q\otimes\q}{|\q|}\right|\leq |\q|.
\end{equation*}
It is natural for us to define 
\begin{equation*}
\frac{\q\otimes\q}{|\q|}=0\ \ \mbox{whenever $\q=0$.}
\end{equation*}
Thus the coefficient matrix $D$ 
is well-defined and satisfies
\begin{equation}\label{ellip}
(a|\mathbf{q}|+m)|\xi|^2\leq D\xi\cdot\xi=(a|\mathbf{q}|+m)|\xi|^2+\frac{b-a}{|\q|}(\q\cdot\xi)^2\leq (b|\mathbf{q}|+m)|\xi|^2\ \ \mbox{for each $\xi\in \mathbb{R}^2$}.
\end{equation}
As observed in \cite{CDL}, each entry of $D$ is a Lipschitz function of $\q$. In particular, we have
\begin{eqnarray}
|D_t|&\leq & c|\nabla v_t|,\label{dpt}\\
|D_{x_1}|+|D_{x_2}|&\leq& c|\nabla^2 v|,\label{dpx}
\end{eqnarray}
where $\nabla^2 v$ denotes the Hessian of $v$. The letter $c$ here and in what follows  represents a generic positive number whose value can be derived from the given data at least in theory. Our main result is:
\begin{theorem}[Main Theorem]Let $a, b, m$ be given as before. Assume:
\begin{enumerate}
	\item[\textup{(H1)}]$\Omega$ is a bounded domain in $\mathbb{R}^2$ with $C^{2,\gamma}$ boundary $\partial\Omega$
	 for some $\gamma\in(0,1)$;
	\item[\textup{(H2)}] $u_0\in C^2(\overline{\Omega})$.
\end{enumerate}
 Then for each $T>0$ there is a weak solution $(v, u)$ to \eqref{gwe1}-\eqref{gwe5} with 
 \begin{equation}\label{r33}
 |\nabla u|\in L^\infty(\ot).
 \end{equation}
\end{theorem}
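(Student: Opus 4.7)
The plan is to prove \eqref{r33} by (i) regularizing \eqref{gwe1}--\eqref{gwe5} to remove the degeneracy of $D$ at $\q=0$, (ii) deriving an $L^\infty$ bound on the energy density $E=D\nabla u\cdot\nabla u$ that is uniform in the regularization parameter $\ep$, and (iii) passing to the limit. One regularizes by replacing $|\q|$ in \eqref{ddf} by $\sqrt{|\q|^2+\ep}$ and $u_0$ by a smooth approximation, obtaining a non-degenerate parabolic-elliptic system. A Schauder fixed-point argument (in a suitable H\"older space, freezing $D$ at the previous iterate) yields a classical solution $(v^\ep,u^\ep)$ on $[0,T]$. Because $\mdiv\q=0$ by \eqref{diq} and because the Dirichlet condition $v=0$ on $\po$ forces $\q$ to be tangent to $\po$, the conormal condition \eqref{gwe3} leads to $\|u^\ep\|_\infty\leq\|u_0\|_\infty$ by the standard maximum principle. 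Elliptic $W^{2,p}$-theory applied to \eqref{gwe1} then converts this into $L^p$-bounds on $\q$, $\nabla\q$, and $\partial_t\q$, hence on the coefficients of $D$ through \eqref{dpt}--\eqref{dpx}, all uniform in $\ep$.

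The decisive structural step is the derivation of a parabolic inequality for $E^j$, $j\geq 1$. Differentiating \eqref{gwe2} in $x_k$, multiplying by $2D_{k\ell}u_{x_\ell}$, and summing produces a formal identity of the schematic form
\[
\partial_t E - \mdiv(D\nabla E) + 2\,D\nabla^2 u : D\nabla^2 u = R,
\]
where $R$ collects lower-order terms involving $\partial_t D$, $\nabla D$, and $\q$. The coercive third-order term involves the $2\times 2$ symmetric matrix $D^2$ paired with the Hessian $\nabla^2 u$, and a naive Cauchy--Schwarz estimate against $\nabla E$ would be insufficient. Here one applies the Cayley--Hamilton identity $A^2=\mbox{tr}(A)A-\mdet(A)I$ for symmetric $2\times 2$ matrices, with $A=D$: the quadratic form $D\nabla^2 u:D\nabla^2 u$ becomes $\mbox{tr}(D)(D\nabla^2 u:\nabla^2 u)-\mdet(D)|\nabla^2 u|^2$, which after rearrangement provides enough coercivity to dominate the bad terms on the right-hand side. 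Multiplying the resulting inequality by $jE^{j-1}$ then produces the announced parabolic inequality for $E^j$, where the power $j$ will be taken large enough to overcome the integrability loss in $R$.

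A De Giorgi iteration is then performed on this inequality. Testing with truncations $(E^j-k_n)_+$ against cylindrical cutoffs and exploiting the Sobolev embedding in two space dimensions together with the $L^p$-estimates on the coefficients from the first step, one derives a geometric recursion $Y_{n+1}\leq Cb^n Y_n^{1+\alpha}$ for the truncated energies, which by the standard De Giorgi lemma forces $\|E^\ep\|_\infty\leq C$ uniformly in $\ep$. By the left inequality in \eqref{ellip}, $E\geq m|\nabla u|^2$, so this gives a uniform bound on $\|\nabla u^\ep\|_\infty$; weak-$*$ compactness combined with the strong convergence afforded by the gradient bound then produces a solution of \eqref{gwe1}--\eqref{gwe5} satisfying \eqref{r33}. \textbf{The principal obstacle is the second paragraph}: executing the differentiation-and-multiplication calculation rigorously at the level of the regularized problem, handling the boundary terms from \eqref{gwe3} produced when $\mdiv(D\nabla E)$ is integrated against a test function, and identifying precisely where $D^2$ appears so that the matrix decomposition can be invoked to convert an apparently non-coercive structure into a genuinely parabolic inequality for $E^j$ with a controllable right-hand side.
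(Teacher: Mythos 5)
Your overall architecture --- regularization of $|\q|$, maximum principle, Calder\'on--Zygmund bounds on $\q$, a parabolic relation for $E^j=(D\nabla u\cdot\nabla u)^j$ attacked by De Giorgi iteration, then passage to the limit --- is the same as the paper's. But the step you yourself flag as the principal obstacle is where the proposal has genuine gaps. Your use of the identity $A^2=\mathrm{tr}(A)A-\mdet(A)I$ is misplaced: coercivity of the Bernstein term $\mathrm{tr}(D\nabla^2u\,D\nabla^2u)=|D^{1/2}\nabla^2u\,D^{1/2}|^2\geq m^2|\nabla^2u|^2$ is automatic and needs no decomposition. The actual difficulty is that the remainder $R$ (and, after multiplication by $jE^{j-1}$, the drift and source coefficients of the inequality for $E^j$) still contains second derivatives of $u$, which are not known to lie in any $L^p$ uniformly; absorbing them by Cauchy--Schwarz into $\epsilon|\nabla^2u|^2$ leaves coefficients such as $|\nabla D|^2|\nabla u|^4$, and a careless bookkeeping of $\nabla\mdiv(D\nabla u)$ even produces $\nabla^2D\sim\nabla^3v$. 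The paper's derivation (Theorem \ref{keyth}) uses the determinant structure for a different purpose: the matrix $\nabla^2uD\nabla^2u$ is rewritten as $(u_t-w)\nabla^2u-\mdet(\nabla^2u)\mdet(D)D^{-1}$, the resulting $\mdet(D)\mdet(\nabla^2u)$ term cancels exactly against the one produced by $2D\nabla u\cdot\nabla(D:\nabla^2u)-\mdiv(D\nabla\vp)$, and the remaining entries of $\nabla^2u$ are eliminated by Cramer's rule applied to the system \eqref{pp1}--\eqref{pp3}. Only then are the coefficients $\mathbf H,h,\mathbf F$ in \eqref{wine111} free of $\nabla^2u$ and expressible through $u_t$, $\nabla^2v$, $\nabla v_t$, $\q$. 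You would need to carry out an equivalent elimination before the De Giorgi step can be set up with coefficients in a controllable class.

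Two further ingredients needed to close the iteration are missing. First, the coefficients involve $D_t\sim\nabla v_t$ and $u_t$ (see \eqref{dpt} and \eqref{pplr}); these must themselves be bounded by powers of $\|\nabla u\|_{\infty,\ot}$, which in the paper requires a separate energy estimate and a separate De Giorgi iteration for $\omega=u_t$ (Lemmas \ref{vptb1} and \ref{plin}). Your proposal never controls $\partial_tD$. Second, even with all coefficients estimated, the De Giorgi iteration does not ``force $\|E\|_\infty\leq C$ uniformly'': the constants in the recursion depend on $\sup_t\|\nabla^2v\|_{2p}$, $\|u_t\|_{2p}$, and the like, hence on $\|\nabla u\|_{\infty,\ot}$ itself, and one lands on a self-referential bound of the form \eqref{r11}, namely $\sup_{\ot}\vp\leq cT^{1/j}\|\vp\|_{\infty,\ot}^{1+(q+2)\ell/j}+c$. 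Breaking this circularity is precisely why the exponent $j$ is introduced: one takes $j$ large so the extra exponent is small, obtains the bound first for small $T$ by an elementary analysis of $g(s)=\ep s^{1+\delta}-s+c$, and then continues the solution in time. Without this mechanism (and without the boundary flattening/reflection needed to run the iteration up to $\Sigma_T$), the argument does not yield \eqref{r33} on any time interval.
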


Assumption (H1) is more than enough for
the classical Calder\'{o}n-Zygmund estimate to hold for $v$ \cite{CFL}. It is also sufficient for the boundary estimate of $u$ (see the Appendix below). Of course, 
we can deduce from \eqref{r33} more regularity results for the weak solution. In fact, one can infer from Section 3 below that equations \eqref{gwe1} and \eqref{gwe2} are satisfied in the a.e. sense. This is why we call our solution a strong solution. 

To view our main theorem in the context of the regularity theory for parabolic partial differential equations, we write \eqref{gwe2} in the form
\begin{equation*}
u_t-\mdiv\left(D\nabla u\right)=-\mdiv\left(u\q\right)\ \ \ \mbox{in $\ot$}.
\end{equation*}
If $D$ is continuous on $\overline{\ot}$, we can deduce from a result in (\cite{BLP}, p.273-274)  that for each $p>2$ there is a positive number $c$ such that
\begin{equation}\label{r22}
\|\nabla u\|_{p,\ot}\leq c\| u\q\|_{p,\ot}+ c\|\nabla u\|_{2,\ot}+c\| u_0\|_{W^{1, p}(\Omega)}.
\end{equation}
(Also see the proof of Theorem 8 in \cite{X2}). This is the parabolic version of the classical $W^{1,p}$ estimate (\cite{R}, p.82). In general, the above inequality fails when $p=\infty$. 
In our case 
the continuity of $D$ is an easy consequence of
\begin{equation}\label{r30}
\q\in  \left(C(\overline{\ot})\right)^2,
\end{equation}
which, in turn, relies on the regularity properties of $u$. As we shall see near the end of Section 3, 
\eqref{r30} is essentially equivalent to \eqref{r33}. Thus the question is: Which one of these two is easier to obtain? To explore this issue, we seek various a priori estimates.
In the beginning of Section 3, we quickly collect the following inequalities :
\begin{equation}\label{fr}
\|u\|_{\infty,\ot}\leq c,\ \sup_{0\leq t\leq T}\|\nabla v\|_{r,\Omega}\leq c(r)\ \ \mbox{for each $r>1$},\|\nabla u\|_{2,\ot}\leq c,\ \mbox{and}\ \| v\|_{L^2(0,T; W^{2,2}(\Omega))}\leq c.
\end{equation}
Even though our problem bears some resemblance to the Patlak-Keller-Segel model in chemotaxis \cite{HP}, 
our system is not a cross-diffusion one due to \eqref{diq} and \eqref{ddf}. Thus no blow-up in solutions has occurred.
In fact, since
the ratio of the largest eigenvalue to the smallest one of $D$ lies in $[1, \frac{b}{a}]$, one may even wonder if it is possible to obtain the H\"{o}lder continuity of $u$ \cite{PS2}. Nonetheless, these results are not enough for our purpose.
 Another possibility is to pursue a parabolic version of the result in \cite{M}, i.e., \eqref{r22} holds for some $p>2$. Unfortunately, even if this is true,  we still cannot reach \eqref{r33} or \eqref{r30} because $p$ can be very close to $2$ and
equation \eqref{gwe1} is degenerate in the time variable.
Indeed, there is a big gap between possible estimates from the classical regularity theory and our desired result.

We will show that we can prove \eqref{r33} directly without going through \eqref{r30}. This is achieved  by deriving an equation for the function
\begin{equation}\label{r111}
\psi=\left(D\nabla u\cdot\nabla u\right)^j, \ \ j\geq 1.
\end{equation}
To be specific, we establish that for each $j\geq 1$ the function $\psi$ defined above  satisfies
\begin{equation}\label{wine1111}
\frac{1}{\psi}\psi_t-	\textup{div}\left(\frac{1}{\psi}D\nabla\psi\right)= \frac{1}{\psi}\mathbf{H}\cdot\nabla\psi+jh+j\textup{div}\mathbf{F}\ \ \mbox{in $\{|\nabla u|>0\}$}.
\end{equation}
Here the coefficients $\mathbf{H}, h, \mathbf{F}$ are  bounded by 
the entries of $D$ and the partial derivatives $u_t, D_t, D_{x_1}, D_{x_2}$. It turns out that each of those partial derivatives can also be bounded by $\nabla u$. The introduction of $j$ enables us to show that \eqref{r30} implies \eqref{r33} (see Section 3 below).

An equation similar to ours in the context of equations of elliptic type was derived in \cite{X7}, based upon contributions in
 earlier works \cite{B,PS1,A,S1}. In our case how to proceed with the derivation is not immediately clear. It turns out that we can rely on
 the following observation: Let $A$ be an $2\times 2$ symmetric matrix. Then we have
\begin{equation}\label{fora}
A^2=\mbox{tr}(A)A-\mdet(A)I.
\end{equation} 
The proof of this formula is very simple. Indeed, denote by $a_{ij}$ the $ij$ entry of $A$. We calculate
\begin{eqnarray}
A^2&=&\left(\begin{array}{cc}
a_{11}&a_{12}\\
a_{12}&a_{22}
\end{array}\right)\left(\begin{array}{cc}
a_{11}&a_{12}\\
a_{12}&a_{22}
\end{array}\right)\nonumber\\
&=&\left(\begin{array}{cc}
a_{11}^2+a_{12}^2&a_{11}a_{12}+a_{12}a_{22}\\
a_{11}a_{12}+a_{12}a_{22}&a_{12}^2+a_{22}^2
\end{array}\right)\nonumber\\
&=&\left(\begin{array}{cc}
a_{11}^2+a_{11}a_{22}-\mdet(A)&a_{12}(a_{11}+a_{22})\\
a_{12}(a_{11}+a_{22})&a_{11}a_{22}-\mdet(A)+a_{22}^2
\end{array}\right)\nonumber\\
&=&(a_{11}+a_{22})\left(\begin{array}{cc}
a_{11}&a_{12}\\
a_{12}&a_{22}
\end{array}\right)-\mdet(A)I.
\end{eqnarray}
For all practical purposes the last term in \eqref{fora} behaves like a scalar function. The formula \eqref{fora} says that a quadratic function of a matrix can be decomposed into a linear function of the matrix plus a roughly scalar function. That is, the first term in the decomposition has weaken the non-linearity, while the second term has reduced the dimensionality.  In this work, we do not actually use the formula directly. What we use is the idea behind the proof of this formula. That is, whenever we can represent certain terms in a high-powered matrix in terms of a determinant, something ``good'' follows. 
It is this tenet that 
guides us to an equation of parabolic type for $\psi$.


We believe that decomposition such as \eqref{fora} is a very powerful tool. 
One possibility is that one can explore the relationship between the cubic power of a three-by-three matrix and its determinant for potential applications.

This work is organized as follows: Section 2 is largely devoted to the derivation of \eqref{wine1111}. In section 3, we assume that problem \eqref{gwe1}-\eqref{gwe5} has a regular enough solution and proceed to derive a priori estimates for the solution. The main theorem is established as a consequence of these estimates. Near the end of the section, we construct a sequence of smooth approximate solutions, thereby justifying the regularity assumptions made earlier in the calculations. In the Appendix, we provide some additional details about the boundary estimates involved.

\section{Derivation of Equation \eqref{wine1111}}

In this section we derive \eqref{wine1111}. It is essentially the parabolic version of the result in \cite{X7}. But before we do that, we recall some definitions and known results and formulae.

If $A(x)$ is a matrix-valued function then
\begin{eqnarray*}
\mbox{div}A(x)&=&\mbox{the row vector whose $i$-th entry is the divergence of the  $i$-th column of $A$ } \nonumber\\
&=&(\mbox{div}\mathbf{A}_1,\mbox{div}\mathbf{A}_2).
\end{eqnarray*}
When $\mathbf{G(x)}$ is a vector-valued function, then
\begin{equation*}
\nabla\mathbf{G(x)}=\mbox{the $2\times 2$ matrix whose $ij$-entry is $(g_j(x))_{x_i}$}
 =(\nabla g_1, \nabla g_2).
\end{equation*}
The following identities will be frequently used
\begin{eqnarray}
\nabla \left(\mathbf{F}\cdot\mathbf{G}\right)&=&\nabla \mathbf{F}\mathbf{G}+\nabla\mathbf{G}\mathbf{F},\label{form1}\\
\mbox{div}\left(A\mathbf{F}\right)&=& A:\nabla\mathbf{F} +\mbox{div}A\mathbf{F},\label{form2}\\
\nabla\left(A\mathbf{F}\right)&=& \nabla\mathbf{F}A^T+\left(
A_{x_1}\mathbf{F},
A_{x_2}\mathbf{F}
\right)^T,\label{form3}\\
\mbox{div}(uA)&=&u\mbox{div}A+(\nabla u)^TA,\label{form4}\\
\nabla|\nabla u|^2&=& 2\nabla^2u\nabla u.\nonumber
\end{eqnarray}

The next lemma deals with sequences of non-negative numbers
which satisfy certain recursive inequalities.
\begin{lemma}\label{ynb}
	Let $\{y_n\}, n=0,1,2,\cdots$, be a sequence of positive numbers satisfying the recursive inequalities
	\begin{equation*}
	y_{n+1}\leq cb^ny_n^{1+\alpha}\ \ \mbox{for some $b>1, c, \alpha\in (0,\infty)$.}
	\end{equation*}
	If
	\begin{equation*}
	y_0\leq c^{-\frac{1}{\alpha}}b^{-\frac{1}{\alpha^2}},
	\end{equation*}
	then $\lim_{n\rightarrow\infty}y_n=0$.
\end{lemma}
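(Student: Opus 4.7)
This is the classical fast-geometric-decay (De Giorgi / Ladyzhenskaya--Uraltseva type) iteration lemma, so I would prove it by a straightforward induction producing an explicit geometric bound on $y_n$. The plan is to guess an ansatz $y_n\leq y_0\theta^n$ with $\theta\in(0,1)$, plug it into the recursion, and solve for the largest $\theta$ (and the correspondingly largest admissible $y_0$) that closes the induction. Since $b>1$, any genuine geometric decay will give $y_n\to 0$, so the only real task is to calibrate $\theta$ against $b$ and $\alpha$.

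More concretely, the calculation for the inductive step is
\begin{equation*}
y_{n+1}\leq cb^{n}y_n^{1+\alpha}\leq cb^{n}(y_0\theta^{n})^{1+\alpha}=cy_0^{1+\alpha}\bigl(b\,\theta^{1+\alpha}\bigr)^{n},
\end{equation*}
and I want this to be $\leq y_0\theta^{n+1}$. Dividing the $n$-dependent parts on both sides forces the choice $b\,\theta^{1+\alpha}=\theta$, i.e.\ $\theta=b^{-1/\alpha}$; this is precisely what kills the ``bad'' factor $b^{n}$. With this $\theta$ the remaining condition becomes $cy_0^{\alpha}\leq\theta=b^{-1/\alpha}$, which is exactly $y_0\leq c^{-1/\alpha}b^{-1/\alpha^{2}}$, matching the hypothesis of the lemma. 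So the induction closes.

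Therefore the proof would proceed in the following order. First, state the claim: under the standing hypotheses, $y_n\leq y_0\,b^{-n/\alpha}$ for every $n\geq 0$. Second, check the base case $n=0$ (trivial). Third, assume the bound for $n$ and substitute into the given recursive inequality, performing the one-line algebra above to obtain $y_{n+1}\leq cy_0^{1+\alpha}b^{-n/\alpha}$ and then using the hypothesis on $y_0$ to upgrade this to $y_0\,b^{-(n+1)/\alpha}$. Fourth, since $b>1$ implies $b^{-n/\alpha}\to 0$, conclude $y_n\to 0$.

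There is no real obstacle here; the lemma is a standard tool and the only subtle point is recognizing that the exponent $-1/\alpha^{2}$ in the hypothesis on $y_0$ is precisely what is needed to absorb the geometric growth factor $b^{n}$ when one iterates with ratio $\theta=b^{-1/\alpha}$. If I wanted to be slightly slicker I could instead verify by induction the bound $y_n\leq y_0\theta^{n}$ for any $\theta\in(b^{-1/\alpha},1)$ at the price of a strict inequality in the smallness assumption, but the sharp form stated in the lemma is obtained exactly by the equality choice $\theta=b^{-1/\alpha}$, and this is what I would write up.
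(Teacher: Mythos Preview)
Your proof is correct and is exactly the standard induction argument for this De Giorgi--type iteration lemma. The paper itself does not supply a proof at all: it simply cites DiBenedetto, \emph{Degenerate Parabolic Equations}, p.~12, where the same induction with the ansatz $y_n\le y_0 b^{-n/\alpha}$ is carried out.
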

This lemma can be found in (\cite{D}, p.12).

Define
\begin{equation}
\erb=\sup_{x\in \mathbb{R}^2}\int_{B_{ r}(x_0)}|f(y)|\chi_{B_{ r}(x_0)}|\ln|x-y||dy.
\end{equation}
Here $B_{ r}(x_0)$ denotes the open ball with center $x_0$ and radius $r$ and  $\chi_{B_{ r}(x_0)}$ is the indicator function of the set $B_{ r}(x_0)$.
We say $f\in K_2(B_{ r}(x_0))$, the class of Kato functions, if $\lim_{r\rightarrow 0}\erb=0$ \cite{K}. Suppose that $f\in L^p(B_{ r}(x_0))$ for some $p>1$. Then we have
\begin{equation}\label{kcl}
\erb\leq \|f\|_{p,B_{ r}(x_0)}\left(\int_{B_{ r}(x_0)}|\ln|x-y||^{\frac{p}{p-1}}dy\right)^{\frac{p=1}{p}}\leq c(\varepsilon)r^{2-\varepsilon} \|f\|_{p,B_{ r}(x_0)},\ \ \ep\in(0,1).
\end{equation}
For each $f\in K_2(B_{ r}(x_0))$, there is a positive number $c$ such that
\begin{equation}\label{kin}
\int_{B_{ r}(x_0)}|f|w^2dx\leq c\erb\left(\int_{B_{ r}(x_0)}|\nabla w|^2dx+\frac{1}{r^2}\int_{B_{ r}(x_0)}w^2dx\right)
\end{equation}
for each $w\in W^{1,2}(B_{ r}(x_0))$.
This inequality is essentially the two-dimensional version of Lemma 1.1 in \cite{FGL}. Also see \cite{K,X9}.

We write \eqref{gwe2} in the form
\begin{equation}\label{happy1}
u_t-D:\nabla^2 u=\mdiv D\nabla u-\nabla u\cdot\q\equiv w.
\end{equation}
Denote by $d_{ij}$ the entry of $D$ that lies in the $i^{\mbox{th}}$ row and the $j^{\mbox{th}}$ column. Then we have
\begin{equation}\label{happy2}
u_t-(d_{11}u_{x_1x_1}+2d_{12}u_{x_1x_2}+d_{22}u_{x_2x_2})= w.
\end{equation}
We introduce the following quantities:
\begin{eqnarray}
\vp&=& D\nabla u\cdot\nabla u=d_{11}u_{x_1}^2+2d_{12}u_{x_1}u_{x_2}+d_{22}u_{x_2}^2,\label{adef}\\
D_1&=&\left(\begin{array}{cc}
d_{11}(d_{11}u_{x_1}+d_{12}u_{x_2})&d_{12}d_{11}u_{x_1}-(d_{22}d_{11}-2d_{12}^2)u_{x_2}\\
d_{11}(d_{12}u_{x_1}+d_{22}u_{x_2})&d_{22}(d_{11}u_{x_1}+d_{12}u_{x_2})
\end{array}\right),\label{dodef}\\
D_2&=&\left(\begin{array}{cc}
d_{11}(d_{12}u_{x_1}+d_{22}u_{x_2})&d_{22}(d_{11}u_{x_1}+d_{12}u_{x_2})\\
-(d_{22}d_{11}-2d_{12}^2)u_{x_1}+d_{12}d_{22}u_{x_2}&d_{22}(d_{12}u_{x_1}+d_{22}u_{x_2})
\end{array}\right),\label{dtdef}\\
D_3&=&-D\nabla u (D\nabla u)^T
,\label{dthdef}\\
\mathbf{G}&=&\vp^{-1}\left(\begin{array}{c}
D_{x_1}\nabla u\cdot\nabla u\\
D_{x_2}\nabla u\cdot\nabla u
\end{array}\right).\label{gdef}
\end{eqnarray}

\begin{theorem}\label{keyth} For each $j\geq 1$ the function $\psi=\vp^j$ satisfies the equation 
	\begin{equation}\label{wine111}
\frac{1}{\psi}\psi_t-	\textup{div}\left(\frac{1}{\psi}D\nabla\psi\right)= \frac{1}{\psi}\mathbf{H}\cdot\nabla\psi+jh+j\textup{div}\mathbf{F}\ \ \textup{in $\{|\nabla u|>0\}$},
	\end{equation}
	where
	\begin{eqnarray}
	\mathbf{H}&=&D\mathbf{G}+2\vp^{-1}u_tD\nabla u+\frac{1}{\textup{det}(D)\vp}\left(
	D_1^T\nabla\mdet(D),
	D_2^T\nabla\mdet(D)
	\right)\nabla u,\label{hdef}\\
	\mathbf{F}&=&-D\mathbf{G}+2\vp^{-1}wD\nabla u,\label{kdef}\\
	h&=&-\frac{1}{\textup{det}(D)\vp}\nabla\mdet(D)\cdot\left(D_1\mathbf{G}, D_2\mathbf{G}\right)\nabla u+\frac{2(u_t-w)}{\textup{det}(D)\vp^2}\nabla\mdet(D)\cdot D_3\nabla u\nonumber\\
	&&-2\vp^{-1}u_t(u_t+\mdiv D\nabla u-w)+\vp^{-1}D_t\nabla u\cdot\nabla u\nonumber\\
	&&-2\vp^{-1}(u_t-w)D\nabla u\cdot \mathbf{G}-D\mathbf{G}\cdot \mathbf{G}.\label{shdef}
	\end{eqnarray}
\end{theorem}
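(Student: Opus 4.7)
The equation \eqref{wine111} has a self-similar structure in $j$. Indeed, substituting $\psi=\varphi^j$ gives $\psi^{-1}\psi_t=j\varphi^{-1}\varphi_t$, $\psi^{-1}\nabla\psi=j\varphi^{-1}\nabla\varphi$, and $\psi^{-1}D\nabla\psi=j\varphi^{-1}D\nabla\varphi$, so every term on both sides of \eqref{wine111} carries exactly one factor of $j$. The theorem therefore reduces to the case $j=1$, to be verified pointwise on the set $\{|\nabla u|>0\}$, where $\varphi>0$ and division by $\varphi$ is legitimate.

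For $j=1$, I would first compute by the product rule and symmetry of $D$ that
\begin{equation*}
\varphi_t=D_t\nabla u\cdot\nabla u+2D\nabla u\cdot\nabla u_t,\qquad \nabla\varphi=\varphi\mathbf{G}+2(\nabla^2 u)D\nabla u,
\end{equation*}
where $\mathbf{G}$ is as in \eqref{gdef}; hence $D\nabla\varphi=\varphi D\mathbf{G}+2D(\nabla^2 u)D\nabla u$. Applying \eqref{form2}--\eqref{form4} expands $\textup{div}(D\nabla\varphi)$ into the sum of $\textup{div}(\varphi D\mathbf{G})$ and $2\,\textup{div}(D(\nabla^2 u)D\nabla u)$. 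The only third-order derivatives of $u$ appear in the last term, in contractions of the form $\sum_{k,\ell}d_{\ell k}u_{x_kx_mx_\ell}$. These I would eliminate using \eqref{happy2}: differentiating $D{:}\nabla^2 u=u_t-w$ in $x_m$ yields
\begin{equation*}
\sum_{k,\ell}d_{\ell k}u_{x_kx_mx_\ell}=u_{x_mt}-w_{x_m}-D_{x_m}{:}\nabla^2 u,
\end{equation*}
after which only zeroth-, first-, and second-order quantities in $u$ remain.

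The main obstacle is then to repackage the surviving second-order contractions of $D\nabla^2 u$ into the right-hand side of \eqref{wine111}. This is precisely the role of the $2\times 2$ Cayley--Hamilton identity \eqref{fora}: applied to $A=D\nabla^2 u$ and combined with $\textup{tr}(D\nabla^2 u)=u_t-w$ from the PDE, it collapses every quadratic expression in $D\nabla^2 u$ into a linear combination of $D\nabla^2 u$ itself and the scalar $\det(D)\det(\nabla^2 u)$ times $I$. A second use of \eqref{fora}, this time for $D$ alone via $D^{-1}=\det(D)^{-1}(\textup{tr}(D)I-D)$, channels the $\det(D)\det(\nabla^2 u)$ pieces into coefficients involving $\det(D)^{-1}$ and $\nabla\det(D)$. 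The matrices $D_1,D_2$ in \eqref{dodef}--\eqref{dtdef} then arise as the bookkeeping tensors collecting these $\nabla\det(D)$ contributions, while $D_3=-D\nabla u(D\nabla u)^T$ records the rank-one outer product produced by terms of type $D\nabla u\cdot\nabla u_t$ after $u_t$ is replaced via the PDE. The remaining task is a lengthy algebraic rearrangement: the decomposition philosophy dictates what must happen, but sorting the many resulting terms into exactly the packages \eqref{hdef}--\eqref{shdef} without misplacing a sign or a factor of $\det(D)$ is where I expect all of the labor to go.
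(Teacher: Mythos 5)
Your reduction to the case $j=1$ (equivalently, to the equation for $\ln\varphi$) is correct and is exactly how the paper proceeds, as are your formulas for $\varphi_t$ and $\nabla\varphi$ and your use of the Cayley--Hamilton identity on $A=D\nabla^2u$: writing $\nabla^2u\,D\,\nabla^2u=D^{-1}(D\nabla^2u)^2=(u_t-w)\nabla^2u-\det(D)\det(\nabla^2u)D^{-1}$ is precisely the paper's computation of the matrix $B$, and the $D^{-1}$ there cancels directly when sandwiched between $(D\nabla u)^T$ and $D\nabla u$, so no second application of the identity to $D$ itself is needed.

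The genuine gap is in the final step. Cayley--Hamilton disposes only of the terms \emph{quadratic} in $\nabla^2u$. After it is applied, and after the difference $2D\nabla u\cdot\nabla(D{:}\nabla^2u)-\textup{div}(D\nabla\varphi)$ is rewritten as the divergence of a determinant-structured vector field, there survive terms that are \emph{linear} in the second derivatives, most importantly $2\varphi^{-1}\nabla\det(D)\cdot\mathrm{adj}(\nabla^2u)\,\nabla u$ (the adjugate of the Hessian contracted with $\nabla\det(D)$) and the $(u_t-w)\nabla^2u$ piece left over from the identity. Your proposal gives no mechanism for removing these: they cannot be absorbed by any purely algebraic regrouping of $\varphi$, $\mathbf{G}$, $u_t$, $w$. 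The paper's device is to regard the two components of $\nabla\varphi=2\nabla^2uD\nabla u+\varphi\mathbf{G}$ together with the equation $D{:}\nabla^2u=u_t-w$ as a $3\times3$ linear system for the three unknowns $u_{x_1x_1},u_{x_1x_2},u_{x_2x_2}$; its determinant equals $\det(D)\varphi$, which is nonzero exactly on $\{|\nabla u|>0\}$, and Cramer's rule then expresses each entry of $\nabla^2u$ through $\nabla\varphi-\varphi\mathbf{G}$ and $u_t-w$. It is this inversion --- not the $\nabla\det(D)$ bookkeeping nor the substitution of $u_t$ via the PDE, as you suggest --- that produces the matrices $D_1,D_2$ (coefficients of $\nabla\varphi-\varphi\mathbf{G}$) and $D_3$ (coefficient of $u_t-w$), and it is also the structural reason the identity is asserted only on $\{|\nabla u|>0\}$. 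Without this step your ``lengthy algebraic rearrangement'' cannot terminate in the form \eqref{hdef}--\eqref{shdef}.
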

\begin{proof}
 This theorem is the parabolic version of a result in \cite{X7}. As is done there, we first derive an equation for
\begin{equation*}
\phi=\ln \vp.
\end{equation*}
We calculate
\begin{eqnarray}
\phi_t-\mdiv\left(D\nabla \phi\right)&= &\frac{1}{\vp}\vp_t-\mdiv\left(\frac{1}{\vp}D\nabla \vp\right)\nonumber\\
&=& \frac{1}{\vp}\vp_t-\frac{1}{\vp}\mdiv\left(D\nabla \vp\right)+\frac{1}{\vp^2}D\nabla\vp\cdot\nabla\vp\nonumber\\
&=&\frac{1}{\vp}\left(\vp_t-\mdiv\left(D\nabla\vp\right)+\frac{1}{\vp}D\nabla\vp\cdot\nabla\vp\right).\label{happy7}
\end{eqnarray}
Take the gradient of \eqref{happy1} and take the dot-product of the resulting equation with $D\nabla u$ to obtain
\begin{equation*}
D\nabla u\cdot\nabla u_t-D\nabla u\cdot\nabla\left(D:\nabla^2 u\right)=D\nabla u\cdot\nabla w.
\end{equation*}
Subsequently,
\begin{eqnarray*}
\vp_t&=&2D\nabla u\cdot\nabla u_t+D_t\nabla u\cdot\nabla u\nonumber\\
&=&2D\nabla u\cdot\nabla\left(D:\nabla^2 u\right)+2D\nabla u\cdot\nabla w+D_t\nabla u\cdot\nabla u\nonumber\\
&=&2D\nabla u\cdot\nabla\left(D:\nabla^2 u\right)+2\mdiv\left(wD\nabla u\right)-2w\mdiv\left(D\nabla u\right)+D_t\nabla u\cdot\nabla u.
\end{eqnarray*}
We evaluate
\begin{eqnarray*}
D\nabla u\cdot\nabla\left(\nabla^2u:D\right)
&=&\left(d_{11}u_{x_1x_1}+2d_{12}u_{x_1x_2}+d_{22}u_{x_2x_2}\right)_{x_1}\left(d_{11}u_{x_1}+d_{12}u_{x_2}\right)\nonumber\\
&&+\left(d_{11}u_{x_1x_1}+2d_{12}u_{x_1x_2}+d_{22}u_{x_2x_2}\right)_{x_2}\left(d_{12}u_{x_1}+d_{22}u_{x_2}\right)\nonumber\\
&=& \mdiv\left(\begin{array}{c}
\left(d_{11}u_{x_1x_1}+2d_{12}u_{x_1x_2}+d_{22}u_{x_2x_2}\right)\left(d_{11}u_{x_1}+d_{12}u_{x_2}\right)\\
\left(d_{11}u_{x_1x_1}+2d_{12}u_{x_1x_2}+d_{22}u_{x_2x_2}\right)\left(d_{12}u_{x_1}+d_{22}u_{x_2}\right)
\end{array}\right)\nonumber\\
&&-(\nabla^2u:D)\mdiv\left(D\nabla u\right)\nonumber\\
&=& \mdiv\left(\begin{array}{c}
\left(d_{11}u_{x_1x_1}+2d_{12}u_{x_1x_2}+d_{22}u_{x_2x_2}\right)\left(d_{11}u_{x_1}+d_{12}u_{x_2}\right)\\
\left(d_{11}u_{x_1x_1}+2d_{12}u_{x_1x_2}+d_{22}u_{x_2x_2}\right)\left(d_{12}u_{x_1}+d_{22}u_{x_2}\right)
\end{array}\right)\nonumber\\
&&-(u_t-w)\mdiv\left(D\nabla u\right).
\end{eqnarray*}
Then calculate from \eqref{happy2} and \eqref{gdef} that
\begin{eqnarray*}
D\nabla \vp&=& D\left(\begin{array}{c}
2d_{11}u_{x_1}u_{x_1x_1}+2d_{12}(u_{x_2}u_{x_1x_1}+u_{x_1}u_{x_1x_2})+2d_{22}u_{x_2}u_{x_2x_1}\\
2d_{11}u_{x_1}u_{x_1x_2}+2d_{12}(u_{x_2}u_{x_1x_2}+u_{x_1}u_{x_2x_2})+2d_{22}u_{x_2}u_{x_2x_2}
\end{array}\right)\nonumber\\
&&+\vp D\mathbf{G}\nonumber\\
&=&2 D\left(\begin{array}{c}
u_{x_1x_1}(d_{11}u_{x_1}+d_{12}u_{x_2})+u_{x_2x_1}(d_{12}u_{x_1}+d_{22}u_{x_2})\\
u_{x_1x_2}(d_{11}u_{x_1}+d_{12}u_{x_2})+u_{x_2x_2}(d_{12}u_{x_1}+d_{22}u_{x_2})
\end{array}\right)+\vp D\mathbf{G}\nonumber\\
&=&2 \left(\begin{array}{c}
(d_{11}u_{x_1x_1}+d_{12}u_{x_1x_2})(d_{11}u_{x_1}+d_{12}u_{x_2})\\
(d_{12}u_{x_1x_1}+d_{22}u_{x_1x_2})(d_{11}u_{x_1}+d_{12}u_{x_2})
\end{array}\right)\nonumber\\
&&+2\left(\begin{array}{c}
(d_{11}u_{x_1x_2}+d_{12}u_{x_2x_2})(d_{12}u_{x_1}+d_{22}u_{x_2})\\
(d_{12}u_{x_1x_2}+d_{22}u_{x_2x_2})(d_{12}u_{x_1}+d_{22}u_{x_2})
\end{array}\right)+\vp D\mathbf{G}.
\end{eqnarray*}
As we mentioned in the introduction, we try to represent the difference between $2D\nabla u\cdot\nabla\left(D:\nabla^2 u\right)$ and $\mdiv\left(D\nabla \vp \right)$ in terms of determinants. To this end, we compute
\begin{eqnarray}
\lefteqn{2D\nabla u\cdot\nabla\left(D:\nabla^2 u\right)-\mdiv\left(D\nabla \vp \right)}\nonumber\\
&=&2\mdiv\left(\begin{array}{c}
\left(d_{12}u_{x_1x_2}+d_{22}u_{x_2x_2}\right)\left(d_{11}u_{x_1}+d_{12}u_{x_2}\right)-(d_{11}u_{x_1x_2}+d_{12}u_{x_2x_2})(d_{12}u_{x_1}+d_{22}u_{x_2})\\
\left(d_{11}u_{x_1x_1}+d_{12}u_{x_1x_2}\right)\left(d_{12}u_{x_1}+d_{22}u_{x_2}\right)-(d_{12}u_{x_1x_1}+d_{22}u_{x_1x_2})(d_{11}u_{x_1}+d_{12}u_{x_2})
\end{array}\right)\nonumber\\
&&-2(u_t-w)\mdiv\left(D\nabla u\right)-\mdiv\left(\vp D\mathbf{G}\right)\nonumber\\
&=&2\mdiv\left(\begin{array}{c}
\mdet(D)(u_{x_2x_2}u_{x_1}-u_{x_1x_2}u_{x_2})\\
\mdet(D)(u_{x_1x_1}u_{x_2}-u_{x_1x_2}u_{x_1})
\end{array}\right)-2(\nabla^2u:D)\mdiv\left(D\nabla u\right)-\mdiv\left(\vp D\mathbf{G}\right)\nonumber\\
&=&2\nabla\mdet(D)\cdot\left(\begin{array}{cc}
u_{x_2x_2}&-u_{x_1x_2}\\
-u_{x_1x_2}&u_{x_1x_1}
\end{array}\right)\nabla u+4\mdet(D)\mdet(\nabla^2u)\nonumber\\
&&-2(u_t-w)\mdiv\left(D\nabla u\right)-\mdiv\left(\vp D\mathbf{G}\right).\label{happy6}
\end{eqnarray}
Here we have used the fact that
\begin{equation*}
\mdiv\left(\begin{array}{c}
u_{x_2x_2}u_{x_1}-u_{x_1x_2}u_{x_2}\\
u_{x_1x_1}u_{x_2}-u_{x_1x_2}u_{x_1}
\end{array}\right)=2\mdet(\nabla^2 u).
\end{equation*}
On the other hand, we have
\begin{eqnarray}
\nabla \vp &=& \nabla^2uD\nabla u+\nabla(D\nabla u)\nabla u\nonumber\\
&=&2\nabla^2uD\nabla u+\vp \mathbf{G}.\label{happy5}
\end{eqnarray}
Consequently,
\begin{eqnarray}
D\nabla \vp \cdot\nabla \vp &=&(\nabla \vp )^TD\nabla \vp \nonumber\\
&=&(2(D\nabla u)^T\nabla^2 u+\vp \mathbf{G}^T) D\left(2\nabla^2uD\nabla u+\vp \mathbf{G}\right)\nonumber\\
&=&4(D\nabla u)^T\nabla^2 uD\nabla^2uD\nabla u+4\vp D\nabla^2uD\nabla u\cdot \mathbf{G}+\vp ^2D\mathbf{G}\cdot \mathbf{G}.\label{happy3}
\end{eqnarray}
Set \begin{equation*}
B=\nabla^2 uD\nabla^2u.
\end{equation*}
 We also represent the four entries of $B$ in terms of determinants as follows
\begin{eqnarray*}
b_{11}&=&
d_{11}u_{x_1x_1}^2+2d_{12}u_{x_1x_1}u_{x_1x_2}+d_{22}u_{x_1x_2}^2\nonumber\\
&=&d_{11}u_{x_1x_1}^2+2d_{12}u_{x_1x_1}u_{x_1x_2}+d_{22}(u_{x_1x_1}u_{x_2x_2}-\mdet(\nabla^2u))\nonumber\\
&=&u_{x_1x_1}(d_{11}u_{x_1x_1}+2d_{12}u_{x_1x_2}+d_{22}u_{x_2x_2})-d_{22}\mdet(\nabla^2u)\nonumber\\
&=&u_{x_1x_1}(u_t-w)-d_{22}\mdet(\nabla^2u),\\
b_{21}&=& d_{11}u_{x_1x_1}u_{x_1x_2}+d_{12}u_{x_1x_1}u_{x_2x_2}+d_{12}u_{x_1x_2}^2+d_{22}u_{x_2x_2}u_{x_1x_2}\nonumber\\
&=&d_{11}u_{x_1x_1}u_{x_1x_2}+d_{12}(u_{x_1x_2}^2+\mdet(\nabla^2u))+d_{12}u_{x_1x_2}^2+d_{22}u_{x_2x_2}u_{x_1x_2}\nonumber\\
&=&u_{x_1x_2}(u_t-w)+d_{12}\mdet(\nabla^2u),\\
b_{12}&=&b_{21},\\
b_{22}&=&
d_{11}u_{x_1x_2}^2+2d_{12}u_{x_1x_2}u_{x_2x_2}+d_{22}u_{x_2x_2}^2\nonumber\\
&=&d_{11}(u_{x_1x_1}u_{x_2x_2}-\mdet(\nabla^2u))+2d_{12}u_{x_1x_2}u_{x_2x_2}+d_{22}u_{x_2x_2}^2\nonumber\\
&=&u_{x_2x_2}(u_t-w)-d_{11}\mdet(\nabla^2u).
\end{eqnarray*}
That is,
\begin{equation*}
B=(u_t-w)\nabla^2u-\mdet(\nabla^2u)\mdet(D)D^{-1}.
\end{equation*}
Plug this into \eqref{happy3} to obtain
\begin{eqnarray}
D\nabla \vp \cdot\nabla \vp &=&4(D\nabla u)^T\left((u_t-w)\nabla^2u-\mdet(\nabla^2u)\mdet(D)D^{-1}\right)D\nabla u\nonumber\\
&&+4\vp D\nabla^2uD\nabla u\cdot \mathbf{G}+\vp ^2D\mathbf{G}\cdot \mathbf{G}\nonumber\\
&=&-4\vp \mdet(\nabla^2u)\mdet(D)+4D\nabla^2uD\nabla u\cdot (\vp \mathbf{G}+(u_t-w)\nabla u)+\vp ^2D\mathbf{G}\cdot \mathbf{G}.\label{happy4}
\end{eqnarray}
Equipped with the preceding results, we can evaluate
\begin{eqnarray}
\lefteqn{\vp _t-\mdiv\left(D\nabla \vp \right)+\frac{1}{\vp }D\nabla \vp \cdot\nabla \vp }\nonumber\\
&=&2\nabla\mdet(D)\cdot\left(\begin{array}{cc}
u_{x_2x_2}&-u_{x_1x_2}\\
-u_{x_1x_2}&u_{x_1x_1}
\end{array}\right)\nabla u\nonumber\\
&&-2u_t\mdiv\left(D\nabla u\right)-\mdiv\left(\vp D\mathbf{G}-2wD\nabla u\right)+D_t\nabla u\cdot\nabla u\nonumber\\
&&+2\vp ^{-1}D(\nabla \vp -\vp \mathbf{G})\cdot (\vp \mathbf{G}+(u_t-w)\nabla u)+\vp D\mathbf{G}\cdot \mathbf{G}\nonumber\\
&=&2\nabla\mdet(D)\cdot\left(\begin{array}{cc}
u_{x_2x_2}&-u_{x_1x_2}\\
-u_{x_1x_2}&u_{x_1x_1}
\end{array}\right)\nabla u\nonumber\\
&&-2u_t\mdiv\left(D\nabla u\right)-\mdiv\left(\vp D\mathbf{G}-2wD\nabla u\right)+D_t\nabla u\cdot\nabla u\nonumber\\
&&+ (2D\mathbf{G}+2\vp ^{-1}(u_t-w)D\nabla u)\cdot\nabla \vp -2(u_t-w)D\nabla u\cdot \mathbf{G}-\vp D\mathbf{G}\cdot \mathbf{G}
\end{eqnarray}
Subsequently,
\begin{eqnarray}
\phi_t-\mdiv\left(D\nabla \phi\right)
&=&\frac{1}{\vp }\left(\vp _t-\mdiv\left(D\nabla \vp \right)+\frac{1}{\vp }D\nabla \vp \cdot\nabla \vp \right)\nonumber\\
&=&2\vp ^{-1}\nabla\mdet(D)\cdot\left(\begin{array}{cc}
u_{x_2x_2}&-u_{x_1x_2}\\
-u_{x_1x_2}&u_{x_1x_1}
\end{array}\right)\nabla u\nonumber\\
&&-2\vp ^{-1}u_t\mdiv\left(D\nabla u\right)-\vp ^{-1}\mdiv\left(\vp D\mathbf{G}-2wD\nabla u\right)+\vp ^{-1}D_t\nabla u\cdot\nabla u\nonumber\\
&&+ (2\vp ^{-1}D\mathbf{G}+2\vp ^{-2}(u_t-w)D\nabla u)\cdot\nabla \vp -2\vp ^{-1}(u_t-w)D\nabla u\cdot \mathbf{G}-D\mathbf{G}\cdot \mathbf{G}\nonumber\\
&=&2\vp ^{-1}\nabla\mdet(D)\cdot\left(\begin{array}{cc}
u_{x_2x_2}&-u_{x_1x_2}\\
-u_{x_1x_2}&u_{x_1x_1}
\end{array}\right)\nabla u\nonumber\\
&&-2\vp ^{-1}u_t(u_t+\mdiv D\nabla u-w)-\mdiv\left(D\mathbf{G}-2\vp ^{-1}wD\nabla u\right)+\vp ^{-1}D_t\nabla u\cdot\nabla u\nonumber\\
&&+ (\vp ^{-1}D\mathbf{G}+2\vp ^{-2}u_tD\nabla u)\cdot\nabla \vp -2\vp ^{-1}(u_t-w)D\nabla u\cdot \mathbf{G}-D\mathbf{G}\cdot \mathbf{G}.\label{happy8}
\end{eqnarray}
We still need to eliminate the second partial derivatives of $u$ on the right hand side of the preceding equation.
To this end, we deduce from \eqref{happy5} and \eqref{happy2} that
\begin{eqnarray}
(d_{11}u_{x_1}+d_{12}u_{x_2})u_{x_1x_1}+(d_{12}u_{x_1}+d_{22}u_{x_2})u_{x_1x_2}&=&\frac{1}{2}(\vp _{x_1}-\vp g_1),\label{pp1}\\
(d_{11}u_{x_1}+d_{12}u_{x_2})u_{x_1x_2}+(d_{12}u_{x_1}+d_{22}u_{x_2})u_{x_2x_2}&=&\frac{1}{2}(\vp _{x_2}-\vp g_2),\label{pp2}\\
d_{11}u_{x_1x_1}+2d_{12}u_{x_1x_2}+d_{22}u_{x_2x_2}&=&u_t- w.\label{pp3}
\end{eqnarray}
Denote by $E$ the coefficient matrix of the above system. Then
\begin{eqnarray*}
\mbox{det}E&=&\mbox{det}\left(\begin{array}{ccc}
d_{11}u_{x_1}+d_{12}u_{x_2}&d_{12}u_{x_1}+d_{22}u_{x_2}&0\\
0&d_{11}u_{x_1}+d_{12}u_{x_2}&d_{12}u_{x_1}+d_{22}u_{x_2}\\
d_{11}&2d_{12}&d_{22}\end{array}\right)\nonumber\\
&=&(d_{11}u_{x_1}+d_{12}u_{x_2})\left[d_{22}(d_{11}u_{x_1}+d_{12}u_{x_2})-2d_{12}(d_{12}u_{x_1}+d_{22}u_{x_2})\right]\nonumber\\
&&+d_{11}(d_{12}u_{x_1}+d_{22}u_{x_2})^2\nonumber\\
&=&(d_{11}u_{x_1}+d_{12}u_{x_2})\left[(d_{22}d_{11}-2d_{12}^2)u_{x_1}-d_{22}d_{12}u_{x_2}\right]\nonumber\\
&&+d_{11}(d_{12}u_{x_1}+d_{22}u_{x_2})^2\nonumber\\
&=&d_{11}(d_{22}d_{11}-d_{12}^2)u_{x_1}^2 +2(d_{11}d_{22}d_{12}-d_{12}^3)u_{x_1}u_{x_2}+(d_{11}d_{22}^2-d_{22}d_{12}^2)u_{x_2}^2\nonumber\\
&=&\mbox{det}(D)\left(d_{11}u_{x_1}^2 +2d_{12}u_{x_1}u_{x_2}+d_{22}u_{x_2}^2\right)\nonumber\\
&=&\mbox{det}(D)\vp \ne 0.
\end{eqnarray*}
By Cramer's rule, we have
\begin{eqnarray*}
	u_{x_1x_1}
	&=&\frac{1}{2\mbox{det}(D)\vp }\left[((d_{22}d_{11}-2d_{12}^2)u_{x_1}-d_{12}d_{22}u_{x_2})(\vp _{x_1}-\vp g_1)-d_{22}(d_{12}u_{x_1}+d_{22}u_{x_2})(\vp _{x_2}-\vp g_2)\right]\nonumber\\
	&&+\frac{1}{\mbox{det}(D)\vp }(u_t-w)(d_{12}u_{x_1}+d_{22}u_{x_2})^2,\\
	u_{x_1x_2}&=&\frac{1}{2\mbox{det}(D)\vp }\left[d_{11}(d_{12}u_{x_1}+d_{22}u_{x_2}))(\vp _{x_1}-\vp g_1)+d_{22}(d_{11}u_{x_1}+d_{12}u_{x_2}))(\vp _{x_2}-\vp g_2)\right]\\
	&&-\frac{1}{\mbox{det}(D)\vp }(u_t-w)(d_{11}u_{x_1}+d_{12}u_{x_2})(d_{12}u_{x_1}+d_{22}u_{x_2}),\\
	u_{x_2x_2}&=&\frac{-1}{2\mbox{det}(D)\vp }\left[d_{11}(d_{11}u_{x_1}+d_{12}u_{x_2})(\vp _{x_1}-\vp g_1)+(d_{12}d_{11}u_{x_1}-(d_{11}d_{22}-2d_{12}^2)u_{x_2})(\vp _{x_2}-\vp g_2)\right]\nonumber\\
	&&	+\frac{1}{\mbox{det}(D)\vp }(u_t-w)(d_{11}u_{x_1}+d_{12}u_{x_2})^2.
\end{eqnarray*}
Using \eqref{dodef}-\eqref{dthdef} yields
\begin{eqnarray*}
\left(\begin{array}{cc}
-u_{x_2x_2}&u_{x_1x_2}\\
u_{x_1x_2}&-u_{x_1x_1}
\end{array}\right)
&=&\frac{1}{2\mbox{det}(D)\vp }\left(D_1(\nabla \vp -\vp \mathbf{G}), D_2(\nabla \vp -\vp \mathbf{G})\right)+\frac{(u_t-w)}{\mbox{det}(D)\vp }D_3.
\end{eqnarray*}
Observe that 
\begin{eqnarray*}
\nabla\mdet(D)\cdot(D_1\nabla \vp , D_2\nabla  \vp )\nabla u&=&(\nabla u)^T\left(\begin{array}{c}
(\nabla \vp )^TD_1^T\nabla\mdet(D)\\
(\nabla \vp )^TD_2^T\nabla\mdet(D)
\end{array}\right)\nonumber\\
&=&(\nabla u)^T\left(\begin{array}{c}
(\nabla\mdet(D))^TD_1\nabla \vp \\
(\nabla\mdet(D))^TD_2\nabla \vp 
\end{array}\right)\nonumber\\
&=&\left(
D_1^T\nabla\mdet(D),
D_2^T\nabla\mdet(D)
\right)\nabla u\cdot\nabla \vp .
\end{eqnarray*}
With this in mind, we derive from \eqref{happy8} that
\begin{eqnarray*}
\lefteqn{\phi_t-\mbox{div}(D\nabla \phi)}\nonumber\\
&=&2\vp ^{-1}\nabla\mdet(D)\cdot\left(\begin{array}{cc}
u_{x_2x_2}&-u_{x_1x_2}\\
-u_{x_1x_2}&u_{x_1x_1}
\end{array}\right)\nabla u\nonumber\\
&&-2\vp ^{-1}u_t(u_t+\mdiv D\nabla u-w)-\mdiv\left(D\mathbf{G}-2\vp ^{-1}wD\nabla u\right)+\vp ^{-1}D_t\nabla u\cdot\nabla u\nonumber\\
&&+ (\vp ^{-1}D\mathbf{G}+2\vp ^{-2}u_tD\nabla u)\cdot\nabla \vp -2\vp ^{-1}(u_t-w)D\nabla u\cdot \mathbf{G}-D\mathbf{G}\cdot \mathbf{G}\nonumber\\
&=&2\vp ^{-1}\nabla\mdet(D)\cdot\left(\frac{1}{2\mbox{det}(D)\vp }\left(D_1(\nabla \vp -\vp \mathbf{G}), D_2(\nabla \vp -\vp \mathbf{G})\right)+\frac{(u_t-w)}{\mbox{det}(D)\vp }D_3\right)\nabla u\nonumber\\
&&-2\vp ^{-1}u_t(u_t+\mdiv D\nabla u-w)-\mdiv\left(D\mathbf{G}-2\vp ^{-1}wD\nabla u\right)+\vp ^{-1}D_t\nabla u\cdot\nabla u\nonumber\\
&&+ (\vp ^{-1}D\mathbf{G}+2\vp ^{-2}u_tD\nabla u)\cdot\nabla \vp -2\vp ^{-1}(u_t-w)D\nabla u\cdot \mathbf{G}-D\mathbf{G}\cdot \mathbf{G}\nonumber\\
&=&\frac{1}{\mbox{det}(D)\vp ^2}\nabla\mdet(D)\cdot\left(D_1\nabla \vp , D_2\nabla \vp \right)\nabla u-\frac{1}{\mbox{det}(D)\vp }\nabla\mdet(D)\cdot\left(D_1\mathbf{G}, D_2\mathbf{G}\right)\nabla u\nonumber\\
&&+\frac{2(u_t-w)}{\mbox{det}(D)\vp ^2}\nabla\mdet(D)\cdot D_3\nabla u-2\vp ^{-1}u_t(u_t+\mdiv D\nabla u-w)-\mdiv\left(D\mathbf{G}-2\vp ^{-1}wD\nabla u\right)\nonumber\\
&&+\vp ^{-1}D_t\nabla u\cdot\nabla u+ (\vp ^{-1}D\mathbf{G}+2\vp ^{-2}u_tD\nabla u)\cdot\nabla \vp -2\vp ^{-1}(u_t-w)D\nabla u\cdot \mathbf{G}-D\mathbf{G}\cdot \mathbf{G}\nonumber\\
&=&\left(D\mathbf{G}+2\vp ^{-1}u_tD\nabla u+\frac{1}{\mbox{det}(D)\vp }\left(
D_1^T\nabla\mdet(D),
D_2^T\nabla\mdet(D)
\right)\nabla u\right)\cdot\nabla \phi\nonumber\\
&&-\frac{1}{\mbox{det}(D)\vp }\nabla\mdet(D)\cdot\left(D_1\mathbf{G}, D_2\mathbf{G}\right)\nabla u+\frac{2(u_t-w)}{\mbox{det}(D)\vp ^2}\nabla\mdet(D)\cdot D_3\nabla u\nonumber\\
&&-\mdiv\left(D\mathbf{G}-2\vp ^{-1}wD\nabla u\right)-2\vp ^{-1}u_t(u_t+\mdiv D\nabla u-w)\nonumber\\
&&+\vp ^{-1}D_t\nabla u\cdot\nabla u-2\vp ^{-1}(u_t-w)D\nabla u\cdot \mathbf{G}-D\mathbf{G}\cdot \mathbf{G}.
\end{eqnarray*}
Using \eqref{hdef}-\eqref{shdef}, we have
\begin{equation}\label{eqfb}
\phi_t-\mbox{div}(D\nabla \phi)=\mathbf{H}\cdot\nabla \phi+h+\mdiv\mathbf{F}.
\end{equation}
To see \eqref{wine111}, we compute
\begin{eqnarray*}
\phi_t&=& \vp ^{-1}\vp _t=\frac{1}{j}\vp ^{-1}\psi^{\frac{1}{j}-1}\psi_t=\frac{1}{j}\psi^{-1}\psi_t,\\
\nabla \phi&=&\vp ^{-1}\nabla \vp =\frac{1}{j}\vp ^{-1}\psi^{\frac{1}{j}-1}\nabla\psi=\frac{1}{j}\psi^{-1}\nabla\psi.
\end{eqnarray*}
Substituting these into \eqref{eqfb} gives the desired result. The proof is complete.
\end{proof}

\section{A priori estimates}
In this section we derive a priori estimates for solutions to \eqref{gwe1}-\eqref{gwe5}. The main theorem will be established as a consequence of these estimates. We begin with the energy estimate.
\begin{lemma}\label{ue} Assume $u_0\in L^2(\Omega)$. Then we have
	\begin{eqnarray*}
\frac{1}{2}\sup_{0\leq t\leq T}\io u^2dx+\ioT D\nabla u\cdot\nabla udxdt&\leq & \io u_0^2(x)dx.
	\end{eqnarray*}
	\end{lemma}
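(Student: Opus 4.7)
The plan is to test equation \eqref{gwe2} with $u$ itself and integrate over $\Omega$, handling each term via integration by parts. First I would write
\begin{equation*}
\io u\,u_t\,dx = \frac{1}{2}\frac{d}{dt}\io u^2\,dx
\end{equation*}
for the time-derivative term. For the diffusion term, applying integration by parts together with the conormal boundary condition \eqref{gwe3} gives
\begin{equation*}
-\io u\,\mdiv(D\nabla u)\,dx = \io D\nabla u\cdot\nabla u\,dx,
\end{equation*}
which is nonnegative by the ellipticity estimate \eqref{ellip}.

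The transport term is where the specific structure of $\mathbf{q}$ matters. I would rewrite
\begin{equation*}
-\io u\,(\nabla u\cdot\mathbf{q})\,dx = -\tfrac{1}{2}\io \nabla(u^2)\cdot\mathbf{q}\,dx = \tfrac{1}{2}\io u^2\,\mdiv\mathbf{q}\,dx - \tfrac{1}{2}\int_{\po} u^2\,\mathbf{q}\cdot\nu\,dS.
\end{equation*}
The volume integral vanishes by \eqref{diq}. For the boundary integral, the key observation is that $v=0$ on $\po$ by \eqref{gqe4}, so the tangential derivative of $v$ along $\po$ vanishes and $\nabla v$ is parallel to $\nu$; since $\mathbf{q}=(-v_{x_2},v_{x_1})^T$ is just the $90^\circ$ rotation of $\nabla v$, it follows that $\mathbf{q}\cdot\nu=0$ on $\po$, killing the boundary term.

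Combining these computations yields the pointwise-in-$t$ identity
\begin{equation*}
\frac{1}{2}\frac{d}{dt}\io u^2\,dx + \io D\nabla u\cdot\nabla u\,dx = 0.
\end{equation*}
Integrating over $(0,t)$ for any $t\in(0,T]$, taking the supremum over $t$ on the left, and using $u(\cdot,0)=u_0$ from \eqref{gwe5}, I obtain
\begin{equation*}
\tfrac{1}{2}\sup_{0\leq t\leq T}\io u^2\,dx + \ioT D\nabla u\cdot\nabla u\,dxdt \leq \io u_0^2\,dx,
\end{equation*}
which is the claim (with a factor of $2$ to spare on the right).

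The only nontrivial step is the boundary vanishing of $\mathbf{q}\cdot\nu$; everything else is routine testing. At the level of a priori estimates one assumes the solution is regular enough ($u\in L^2(0,T;W^{1,2}(\Omega))$ with $u_t$ in a suitable dual space, and $v$ smooth enough for the tangential differentiation of $v|_{\po}=0$ to make sense pointwise), which is compatible with the regular approximate solutions the author constructs later in Section 3.
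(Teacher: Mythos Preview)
Your proof is correct and follows essentially the same approach as the paper: test \eqref{gwe2} with $u$, use the conormal condition \eqref{gwe3} for the diffusion term, and show the transport term integrates to zero using the Dirichlet condition $v=0$ on $\po$. The paper's only variation is in this last step: instead of arguing geometrically that $\mathbf{q}\cdot\nu=0$, it rewrites $\mathbf{q}\cdot\nabla u^2=\mdiv\bigl(v(u^2)_{x_2},\,-v(u^2)_{x_1}\bigr)^T$ so that the boundary integrand carries an explicit factor of $v$.
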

\begin{proof}Use $u$ as a test function in \eqref{gwe2} and keep in mind \eqref{ellip} to obtain
	\begin{equation}\label{ue1}
	\frac{1}{2}\frac{d}{dt}\io u^2dx+\io\left((a|\q|+m)|\nabla u|^2+\frac{b-a}{|\q|}(\q\cdot\nabla u)^2\right)dx=-\frac{1}{2}\io\q\cdot\nabla u^2dx.
	\end{equation}
By the definition of $\q$, we have
\begin{equation*}
\io\q\cdot\nabla u^2dx=\io\left(-\vxt (u^2)_{x_1}+\vxo (u^2)_{x_2}\right)dx=\io\mdiv\left(\begin{array}{c}
v(u^2)_{x_2}\\
-v(u^2)_{x_1}
\end{array}\right)dx=0.
\end{equation*}	
The last step is due to the boundary condition for $v$.	Plug this into \eqref{ue1} and integrate to obtain the desired result.	\end{proof}

\begin{lemma}\label{ump}
	The function $u$ satisfies the weak maximum principle, i.e.,
	\begin{equation*}
	\sup_{\ot}|u|\leq \|u_0\|_{\infty,\Omega}.
	\end{equation*}
\end{lemma}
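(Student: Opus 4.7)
The plan is the standard De Giorgi/Stampacchia style truncation argument, taking advantage of the special divergence-free structure of $\mathbf{q}$ and the boundary conditions. Set $k=\|u_0\|_{\infty,\Omega}$. I would test equation \eqref{gwe2} with $(u-k)_+$, integrate over $\Omega$, and use the boundary condition \eqref{gwe3} to kill the boundary contribution from the diffusion term, obtaining
\begin{equation*}
\frac{1}{2}\frac{d}{dt}\io [(u-k)_+]^2\,dx+\io D\nabla(u-k)_+\cdot\nabla(u-k)_+\,dx=-\io\q\cdot\nabla u\,(u-k)_+\,dx.
\end{equation*}
The left-hand diffusion integral is non-negative by the ellipticity estimate \eqref{ellip}.

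The main point, which is the crux of the argument, is that the convection term on the right vanishes. Writing $\q\cdot\nabla u\,(u-k)_+=\tfrac{1}{2}\q\cdot\nabla[(u-k)_+]^2$ and integrating by parts gives
\begin{equation*}
-\io\q\cdot\nabla u\,(u-k)_+\,dx=\tfrac{1}{2}\io(\mdiv\q)[(u-k)_+]^2\,dx-\tfrac{1}{2}\int_{\po}(\q\cdot\nu)[(u-k)_+]^2\,dS.
\end{equation*}
The first term is zero by \eqref{diq}. For the boundary integral I use that $v=0$ on $\po$ by \eqref{gqe4}, so $\nabla v$ is normal to $\po$, i.e.\ parallel to $\nu$; but $\q=(-v_{x_2},v_{x_1})^T$ is the $90^\circ$ rotation of $\nabla v$, hence $\q\cdot\nu\equiv 0$ on $\po$. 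Thus the right-hand side of the energy identity is zero and we are left with
\begin{equation*}
\frac{d}{dt}\io[(u-k)_+]^2\,dx\leq 0.
\end{equation*}

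Since $k=\|u_0\|_{\infty,\Omega}$, the initial datum \eqref{gwe5} gives $(u-k)_+(\cdot,0)=0$, so Gronwall (or direct integration) forces $(u-k)_+\equiv 0$ on $\ot$, giving $u\leq \|u_0\|_{\infty,\Omega}$. Applying the same argument to $-u$, or equivalently testing with $(u+k)_-$ where $k=\|u_0\|_{\infty,\Omega}$, yields $u\geq -\|u_0\|_{\infty,\Omega}$, and the two bounds combine to the stated maximum principle. The main obstacle is verifying $\q\cdot\nu=0$ on $\po$; everything else is standard. Note that this argument is formal in that it assumes $(u-k)_+$ is an admissible test function and that $u$ is regular enough in $t$ to justify the chain rule; these regularity issues are handled later in the paper through the smooth approximation scheme mentioned at the end of Section 3.
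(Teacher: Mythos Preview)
Your proof is correct and follows essentially the same approach as the paper: test \eqref{gwe2} with $(u-k)^+$, use the ellipticity \eqref{ellip}, and show the convection integral vanishes thanks to $\mdiv\q=0$ and the boundary condition $v=0$ on $\po$. The only cosmetic difference is that the paper dispatches the convection term by rewriting $\q\cdot\nabla[(u-k)^+]^2$ as $\mdiv\bigl(v\,[(u-k)^+]^2_{x_2},\,-v\,[(u-k)^+]^2_{x_1}\bigr)^T$ and invoking $v|_{\po}=0$ directly (as in the proof of Lemma~\ref{ue}), whereas you equivalently argue $\q\cdot\nu=0$ on $\po$ via the geometric observation that $\q$ is the rotation of $\nabla v$.
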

This is a consequence of \eqref{diq}.
Indeed, let $L=\|u_0\|_{\infty,\Omega}$. Then we can write \eqref{gwe2} in the form
\begin{equation*}
u_t-\mdiv\left(D\nabla u-(u-L)\q\right)= 0\ \ \ \mbox{in $\ot$},
\end{equation*}
where $D$ is given as in \eqref{ddf}. Use $(u-L)^+$ as a test function in this equation and then apply \eqref{ellip} to get
\begin{eqnarray*}
\lefteqn{\frac{1}{2}\frac{d}{dt}\io\left[(u-L)^+\right]^2dx}\nonumber\\
&&+\io \left((a|\mathbf{q}|+m)|\nabla(u-L)^+|^2+(b-a)\frac{(\q\cdot\nabla(u-L)^+)^2}{|\q|}\right)dx\nonumber\\
&=&-\frac{1}{2}\io\q\cdot\nabla\left[(u-L)^+\right]^2dx.
\end{eqnarray*}
As before, the last integral is zero. Thus we have
\begin{equation*}
\frac{1}{2}\frac{d}{dt}\io\left[(u-L)^+\right]^2dx+m\io|\nabla(u-L)^+|^2dx\leq 0.
\end{equation*}
Integrate to obtain the desired result.

According to a result in (\cite{R}, p.82), for each $r>1$ there is a positive number $c$ such that
\begin{equation}\label{qlpb}
\sup_{0\leq t\leq T}\|\q\|_{r,\Omega}=\sup_{0\leq t\leq T}\|\nabla v\|_{r,\Omega}\leq c\sup_{0\leq t\leq T}\|u\|_{r,\Omega}\leq c.
\end{equation}
In addition, we can conclude from the classical Calder\'{o}n-Zygmund estimate that
\begin{equation}\label{vwtb}
\int_{0}^{T}\|v\|_{W^{2,2}(\Omega)}^2dt\leq c\int_{0}^{T}\io(\uxo)^2dxdt\leq c.
\end{equation}

Now we set
\begin{equation*}
\beta=v_t,\ \ \omega= u_t.
\end{equation*}
By \eqref{dpt} and \eqref{qd}, we have
\begin{equation}\label{dtb}
|D_t|\leq c|\nabla\beta|,\ \ \ |\q_t|=|\nabla\beta|.
\end{equation}
Differentiate \eqref{gwe1} with respect to $t$ to get
\begin{equation*}
\Delta\beta=\omega_{x_1}\ \ \ \mbox{in $\ot$.}
\end{equation*}
Moreover,
\begin{equation*}
\beta\mid_{\po}=0.
\end{equation*}
On account of \eqref{qlpb}, for each $r>1$ there is a positive number $c$
such that
\begin{equation}\label{pplr}
\|\nabla\beta\|_{r, \Omega}\leq c\|\omega\|_{r,\Omega}.
\end{equation}
For Theorem \ref{keyth} to be useful to us, we must be able to bound $u_t$ by $\nabla u$ . The following two lemmas address this issue.
\begin{lemma}\label{vptb1} We have
	\begin{equation*}
	\ioT u_t^2dxdt+\sup_{0\leq t\leq T}\io D\nabla u\cdot\nabla u dx\leq c\ioT|\nabla u|^4dxdt+c.
	\end{equation*}
\end{lemma}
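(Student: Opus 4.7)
\textbf{Plan of proof for Lemma \ref{vptb1}.} The natural test function for this energy identity is $u_t$ itself. The plan is to multiply equation \eqref{gwe2} by $u_t$, integrate over $\Omega$, and use the boundary condition \eqref{gwe3} to integrate by parts:
\begin{equation*}
\io u_t^2\,dx + \io D\nabla u\cdot\nabla u_t\,dx = -\io(\nabla u\cdot\q)u_t\,dx.
\end{equation*}
The term involving $\nabla u_t$ is then converted into a time derivative of the elliptic energy via
\begin{equation*}
\io D\nabla u\cdot\nabla u_t\,dx=\frac{d}{dt}\io \tfrac{1}{2}D\nabla u\cdot\nabla u\,dx-\tfrac{1}{2}\io D_t\nabla u\cdot\nabla u\,dx,
\end{equation*}
which leaves us with two error terms to control: one involving $D_t$ and one involving $\q$.

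For the $D_t$ term I would invoke \eqref{dpt} to get $|D_t|\le c|\nabla\beta|$, and then apply Young's inequality to split
\begin{equation*}
\io|\nabla\beta||\nabla u|^2\,dx\le \ep\io|\nabla\beta|^2\,dx+C_\ep\io|\nabla u|^4\,dx.
\end{equation*}
The point is that \eqref{pplr} with $r=2$ gives $\|\nabla\beta\|_{2,\Omega}^2\le c\|u_t\|_{2,\Omega}^2$, so the $\ep$-term can be absorbed into $\io u_t^2$ provided $\ep$ is chosen small enough relative to the coefficient $c$ from \eqref{pplr}. For the transport term I would again use Young's inequality plus the $L^\infty(0,T;L^r(\Omega))$ bound for $\q$ from \eqref{qlpb} (with $r=4$):
\begin{equation*}
\left|\io(\nabla u\cdot\q)u_t\,dx\right|\le \tfrac{1}{4}\io u_t^2\,dx+\io|\q|^2|\nabla u|^2\,dx\le \tfrac{1}{4}\io u_t^2\,dx+c+c\io|\nabla u|^4\,dx,
\end{equation*}
where the last step is Cauchy--Schwarz together with $\sup_t\|\q\|_{4,\Omega}\le c$.

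Combining, choosing $\ep$ small enough to absorb the $u_t^2$ contributions into the left-hand side, and integrating in time from $0$ to any $s\in[0,T]$, I would obtain
\begin{equation*}
\tfrac{1}{2}\int_0^s\!\!\io u_t^2\,dxdt+\io \tfrac{1}{2}D\nabla u\cdot\nabla u\,dx\Big|_{t=s}\le c\int_0^s\!\!\io|\nabla u|^4\,dxdt+cT+\io \tfrac{1}{2}D(\cdot,0)\nabla u_0\cdot\nabla u_0\,dx.
\end{equation*}
Under hypothesis (H2) the last term is bounded by the data, and taking the supremum over $s\in[0,T]$ yields the claimed estimate.

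The main obstacle I anticipate is not any single inequality but the bookkeeping: one has to simultaneously (i) have enough regularity of $u$ and $v_t$ to legitimately use $u_t$ as a test function and to differentiate \eqref{gwe1} in $t$, and (ii) get the constants in the absorption step to come out in the right order (the coefficient of $\io u_t^2$ produced by \eqref{pplr} must be beaten by the $\ep$ one is willing to spend). Part (i) is fine here because the lemma is an a priori estimate under the regularity assumption declared at the start of Section 3 (and to be justified later by approximation); part (ii) is resolved simply by picking $\ep$ small after \eqref{pplr} has produced its constant.
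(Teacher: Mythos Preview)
Your proposal is correct and follows essentially the same approach as the paper: test \eqref{gwe2} with $u_t$, convert $\io D\nabla u\cdot\nabla u_t\,dx$ into a time derivative plus a $D_t$ error, control $D_t$ via \eqref{dpt} and \eqref{pplr}, control the transport term via the $L^4$ bound on $\q$ from \eqref{qlpb}, absorb the $u_t^2$ contributions, and integrate in time. The only cosmetic difference is that the paper applies H\"older first and leaves the Young-inequality absorption implicit, whereas you apply Young with an explicit $\varepsilon$ at each step; the logic is identical.
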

\begin{proof}
Use $u_t$ as a test function in \eqref{gwe2} to obtain
\begin{equation}\label{utb}
\io (u_t)^2dx+\io D\nabla u\cdot\nabla u_t=-\io \q\cdot\nabla u u_t dx.
\end{equation}
Note that $D$ is symmetric. Hence
\begin{eqnarray*}
\io D\nabla u\cdot\nabla u_t&=&\frac{1}{2}\frac{d}{dt}\io D\nabla u\cdot\nabla udx-\frac{1}{2}\io D_t\nabla u\cdot\nabla udx.
\end{eqnarray*}
We estimate from \eqref{dtb} and \eqref{pplr} that
\begin{eqnarray*}
\io D_t\nabla u\cdot\nabla udx&\leq &\left(\io |D_t|^2dx\right)^{\frac{1}{2}}\left(\io|\nabla u|^4dx\right)^{\frac{1}{2}}\nonumber\\
&\leq & c\left(\io |u_t|^2dx\right)^{\frac{1}{2}}\left(\io|\nabla u|^4dx\right)^{\frac{1}{2}}.
\end{eqnarray*}
Similarly,
\begin{eqnarray*}
-\io \q\cdot\nabla u u_t dx&\leq &\left(\io |u_t|^2dx\right)^{\frac{1}{2}}\left(\io|\nabla u|^4dx\right)^{\frac{1}{4}}\left(\io|\q|^4dx\right)^{\frac{1}{4}}\nonumber\\
&\leq &c\left(\io |u_t|^2dx\right)^{\frac{1}{2}}\left(\io|\nabla u|^4dx\right)^{\frac{1}{4}}.
\end{eqnarray*}
Here the last step is due to \eqref{qlpb}.
Use the above two inequalities in \eqref{utb} and integrate to derive
\begin{equation*}
\ioT(u_t)^2dxdt+\sup_{0\leq t\leq T}\io  D\nabla u\cdot\nabla udx\leq c\ioT|\nabla u|^4dxdt+c\io D(x,0)\nabla u_0\cdot\nabla u_0dx+c.
\end{equation*}
Our assumptions on $u_0$ implies that 
\begin{equation*}
\io D(x,0)\nabla u_0\cdot\nabla u_0dx\leq c.
\end{equation*}
To see this, remember that $v_0\equiv v(x,0)$ is the solution of
\begin{eqnarray}
\Delta v_0&=&(u_0)_{x_1}\ \ \mbox{in $\Omega$},\label{vz1}\\
v_0&=&0\ \ \ \mbox{on $\po$}\label{vzt}
\end{eqnarray}
and $$\q(x,0)=\left(\begin{array}{c}
-(v_0)_{x_2}\\
(v_0)_{x_1}
\end{array}\right).$$
In fact, it is enough for us to assume that $u_0\in W^{1,p}(\Omega)$ for some $p>2$ because this already implies $|\q(x,0)|=|\nabla v_0|\in L^\infty(\Omega)$.
Therefore,
\begin{eqnarray}
|D(x,0)|&=&|(a|\q(x,0)|+m)I+\frac{b-a}{|\q(x,0)|}\q(x,0)\otimes\q(x,0)|\nonumber\\
&\leq &b|\q(x,0)|+m\leq c.\label{dzd}
\end{eqnarray}
The proof is complete.
\end{proof}
\begin{lemma}\label{plin} 
	For each $q>2$ there is a positive number $c$ such
	that
	\begin{equation*}
	\|\omega\|_{\infty,\ot}\leq c\|\nabla u\|_{\infty,\ot}^{2+q}+c.
	\end{equation*}
	\end{lemma}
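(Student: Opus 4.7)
Differentiate \eqref{gwe2} in $t$ to obtain, for $\omega=u_t$, the linear parabolic equation
\[
\omega_t-\mdiv(D\nabla\omega)+\q\cdot\nabla\omega=\mdiv(D_t\nabla u)-\q_t\cdot\nabla u\quad\text{in }\ot.
\]
Set $M:=\|\nabla u\|_{\infty,\ot}$. Then: $mI\le D\le c(M+1)I$ by \eqref{ellip}; $\|\q\|_{\infty,\Omega}\le c\|\nabla u\|_{p,\Omega}\le cM+c$ for any $p>2$, by Calder\'on--Zygmund applied to $\Delta v=u_{x_1}$ together with Sobolev embedding in two dimensions; $|D_t|+|\q_t|\le c|\nabla\beta|$ by \eqref{dtb}; and $\|\nabla\beta\|_{r,\Omega}\le c\|\omega\|_{r,\Omega}$ for every $r>1$ by \eqref{pplr}. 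Because $v=\beta=0$ on $\po$ forces $\nabla v,\nabla\beta$ to be parallel to $\nu$, we have $\q\cdot\nu=\q_t\cdot\nu=0$ on $\po$, so \eqref{gwe3} collapses to $\nabla u\cdot\nu=0$, and differentiating in $t$ yields $\nabla\omega\cdot\nu=0$ on $\po$; a short check using the explicit form of $D$ then gives $D_t\nabla u\cdot\nu=0$ on $\po$ as well. Hence every boundary integral produced by the subsequent integration by parts vanishes, and since $\mdiv\q=0$, the convective term $\int|\omega|^{p-2}\omega\,\q\cdot\nabla\omega\,dx=\frac{1}{p}\int\q\cdot\nabla|\omega|^p\,dx$ is identically zero.

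\textbf{Moser iteration.} Multiplying the equation for $\omega$ by $|\omega|^{p-2}\omega$ with $p\ge 2$, integrating over $\Omega$, applying Young's inequality to absorb a fraction of $(p-1)m\int|\omega|^{p-2}|\nabla\omega|^2\,dx$, and using H\"older with $\|\nabla\beta\|_p\le c\|\omega\|_p$ to bound $\int|\omega|^{p-2}|\nabla\beta|^2\,dx\le c\|\omega\|_p^p$, I arrive at
\[
\frac{d}{dt}\|\omega\|_p^p+\frac{c}{p}\bigl\|\nabla|\omega|^{p/2}\bigr\|_2^2\le c\,p\,M^2\,\|\omega\|_p^p.
\]
Coupling this with the two-dimensional Gagliardo--Nirenberg inequality $\|w\|_4^4\le c\|\nabla w\|_2^2\|w\|_2^2+c\|w\|_2^4$ applied to $w=|\omega|^{p/2}$ and integrating in $t$ produces a Moser-type recursion
\[
\|\omega\|_{L^{2p}(\ot)}^{2p}\le C\bigl(p^{\alpha}M^2\|\omega\|_{L^p(\ot)}^p+\|\omega_0\|_\infty^p\bigr)^2
\]
for some universal $\alpha>0$. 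Starting from $p_0=2$, where Lemma~\ref{vptb1} provides $\|\omega\|_{L^2(\ot)}\le c(M+1)$, and iterating $p_k=2^kp_0$, the geometric series $\sum 2/p_k$ is summable, so the accumulated exponent of $M$ is finite; passage to the limit $p_k\to\infty$ delivers $\|\omega\|_{\infty,\ot}\le cM^{2+q}+c$ for each $q>2$.

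\textbf{Main obstacle.} The principal difficulty is the self-referential character of the forcing: $|D_t\nabla u|+|\q_t\cdot\nabla u|$ is controlled only through $|\nabla\beta|$, and $\nabla\beta$ can be recaptured in terms of $\omega$ only via the elliptic regularity \eqref{pplr}. Making this substitution integrability-preserving at every Moser step, and ensuring the recursion carries only polynomial (not exponential) dependence on $M$, is where the care is needed; the unavoidable endpoint loss when exiting into $L^\infty$ in two dimensions is precisely what accounts for the exponent $2+q$ with $q>2$ rather than a sharp power.
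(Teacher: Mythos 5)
Your route is sound and genuinely different from the paper's. The paper runs a De Giorgi truncation argument on the differentiated equation \eqref{wine11}: testing with $(\omega-k_n)^+$, it derives a recursive inequality for the measures of the level sets $Y_n$ and invokes Lemma~\ref{ynb} to conclude $\|\omega\|_{\infty,\ot}\le c\|D_t\nabla u-u\q_t\|_{2q,\ot}+c$; the self-referential character of the forcing (your ``main obstacle'') is then resolved in one stroke by the interpolation $\|\omega\|_{2q,\ot}\le\varepsilon\|\omega\|_{\infty,\ot}+\varepsilon^{1-q}\|\omega\|_{2,\ot}$, which absorbs the $L^{2q}$ norm into the left-hand side at the price of the factor $(\|\nabla u\|_{\infty,\ot}+1)^{q}$ --- that absorption, not an endpoint Sobolev loss, is where the exponent $q$ in the statement comes from. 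You instead run a Moser ladder and dispose of the self-reference rung by rung, bounding $\|\nabla\beta\|_p$ by $\|\omega\|_p$ at the same integrability level so that the forcing enters only as a zeroth-order term $cp^{\alpha}M^2\|\omega\|_p^p$; since the accumulated exponent of $M$ along the ladder is a convergent geometric series, you end up with a fixed power (roughly $M^4$) independent of $q$, which implies the stated bound because $2+q>4$. Both arguments bottom out at Lemma~\ref{vptb1} and both rest on the same structural facts: $\mdiv\q=0$ kills the convective term, and \eqref{dtb} together with \eqref{pplr} converts the forcing into a power of $\omega$ itself.

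Two points need to be nailed down for your version to close. First, the constant in \eqref{pplr} depends on $r$, and you invoke it for $r=p_k\to\infty$; you must check that it grows at most polynomially in $r$ (it does --- the map $\omega\mapsto\nabla\beta$ is of Calder\'on--Zygmund type, with $L^r$ operator norm $O(r)$ for large $r$ --- but without this check the infinite product $\prod_k C_{p_k}^{1/p_{k+1}}$ in the Moser iteration could diverge). Second, your recursion carries the additive term $\|\omega(\cdot,0)\|_{\infty,\Omega}^p$, so you need the bound $\|\omega(\cdot,0)\|_{\infty,\Omega}\le c$, which the paper obtains by evaluating \eqref{gwe2} at $t=0$ and using (H2) and the elliptic estimates for $v(\cdot,0)$; you should include this, and then run the iteration on $|\omega|+k_0$ with $k_0=\|\omega(\cdot,0)\|_{\infty,\Omega}+1$ (or an equivalent device) so that the recursion becomes purely multiplicative and the additive term does not accumulate over infinitely many rungs.
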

\begin{proof}First we observe that  
	\begin{equation}\label{pic}
	\|\omega(x,0)\|_{\infty,\Omega}\leq c.
	\end{equation}
	To see this, we let $t=0$ in \eqref{gwe2} to obtain
	\begin{eqnarray*}
	\omega(x,0)&=&\mdiv\left(D(x,0)\nabla u(x,0)\right)-\nabla u(x,0)\q(x,0)\nonumber\\
	&=&D(x,0):\nabla^2u_0+\mdiv D(x,0)\nabla u_0-\nabla u_0\q(x,0).
	\end{eqnarray*}
We can easily derive \eqref{pic} from (H2), \eqref{dzd}, \eqref{dpx}, and \eqref{vz1}.
	
Differentiate \eqref{gwe2} with respect to $t$ to get
\begin{equation}\label{wine11}
\omega_t-\mdiv\left(D\nabla \omega+D_{t}\nabla u\right)=-\q\cdot\nabla\omega-\q_t\cdot\nabla u \ \ \ \mbox{in $\ot$}.
\end{equation}
Furthermore, 
\begin{eqnarray}
\partial_t\left(D\nabla u\right)\cdot\nu&=&0\ \ \mbox{on $\Sigma_T$.}\label{vptb}
\end{eqnarray}
Pick 
\begin{eqnarray}
k\geq 2\|\omega(x,0)\|_{\infty,\Omega}\label{cnk}
\end{eqnarray} as below. Set
\begin{equation*}
k_n=k-\frac{k}{2^{n+1}},\ \  n=0,1,2,\cdots.
\end{equation*}
Assume that
\begin{equation*}
\sup_{\ot}\omega=\|\omega\|_{\infty, \ot}.
\end{equation*}
Otherwise, consider $-\omega$. In view of \eqref{vptb}, we can use $(\omega-k_n)^+$ as a test function in \eqref{wine11} to obtain
\begin{eqnarray}
\lefteqn{\frac{1}{2}\frac{d}{dt}\io\left[(\omega-k_n)^+\right]^2dx+\io D\nabla(\omega-k_n)^+\cdot\nabla(\omega-k_n)^+dx }\nonumber\\
&=&-\frac{1}{2}\io\q\cdot\nabla\left[(\omega-k_n)^+\right]^2dx-\io D_{t}\nabla u\cdot\nabla(\omega-k_n)^+dx-\io \nabla u\cdot\q_t(\omega-k_n)^+dx\nonumber\\
&=&-\io D_{t}\nabla u\cdot\nabla(\omega-k_n)^+dx-\io \nabla u\cdot\q_t(\omega-k_n)^+dx.\label{pinb1}
\end{eqnarray}
Set
\begin{eqnarray*}
\Omega_{n}(t)&=&\{x\in\Omega: \omega(x,t)\geq k_n\},\\
A_{\Omega,k}(t)&=&\frac{1}{|\Omega|}\io(\omega(x,t)-k_n)^+dx.
\end{eqnarray*}
By virtue of Poincar\'{e}'s inequality, for each $r>2$ we have that 
\begin{eqnarray*}
-\io \nabla u\cdot\q_t(\omega-k_n)^+dx&\leq &\|\nabla u\cdot\q_t\|_{\frac{r}{r-1}, \Omega_{n}(t)}\left(\io|\nabla(\omega-k_n)^+|^{\frac{2r}{r+2}}dx\right)^{\frac{r+2}{2r}}\nonumber\\
&&-A_{\Omega,k}(t)\int_{\Omega_{n}(t)} \nabla u\cdot\q_tdx\nonumber\\
&\leq &c\|\nabla u\cdot\q_t\|_{\frac{r}{r-1}, \Omega_{n}(t)}\left(\io|\nabla(\omega-k_n)^+|^{2}dx\right)^{\frac{1}{2}}\nonumber\\
&&+c\left(\io\left[(\omega-k_n)^+\right]^2dx\right)^{\frac{1}{2}}\int_{\Omega_{n}(t)} |\nabla u\cdot\q_t|dx.
\end{eqnarray*}
This together with \eqref{pinb1} implies
\begin{equation*}
\sup_{0\leq t\leq T}\io\left[(\omega-k_n)^+\right]^2dx+\ioT |\nabla(\omega-k_n)^+|^2dxdt\leq c \int_{0}^{T}\int_{\Omega_{n}(t)}|D_{t}\nabla u-u\q_t|^2dxdt.
\end{equation*}
Here we have used the fact that $\frac{r}{r-1}<2$.
Let
\begin{equation*}
Y_n=\int_{0}^{T}|\Omega_{n}(t)|dt=|\{(x,t)\in\ot:\omega(x,t)\geq k_n\}|.
\end{equation*}
Let $s\in (1,2)$ be given.  We estimate from Poincar\'{e}'s inequality that
\begin{eqnarray}
\lefteqn{\ioT\left[(\omega-k_n)^+\right]^{2s}dxdt}\nonumber\\
&=&\int_{0}^{T}\left(\io\left[(\omega-k_n)^+\right]^2dx\right)^{\frac{s}{2}}\left(\io\left[(\omega-k_n)^+\right]^{\frac{2s}{2-s}}dx\right)^{\frac{2-s}{2}}dt\nonumber\\
&\leq &2^{s-1}\left(\sup_{0\leq t\leq T}\io\left[(\omega-k_n)^+\right]^2dx\right)^{\frac{s}{2}}\int_{0}^{T}\left(\io|(\omega-k_n)^+-A_{\Omega,k}(t)|^{\frac{2s}{2-s}}dx\right)^{\frac{2-s}{2}}dt\nonumber\\
&&+2^{s-1}\left(\sup_{0\leq t\leq T}\io\left[(\omega-k_n)^+\right]^2dx\right)^{\frac{s}{2}}\int_{0}^{T}A_{\Omega,k}^s(t)|\Omega_n(t)|^{\frac{2-s}{2}}dt\nonumber\\
&\leq &\left(\sup_{0\leq t\leq T}\io\left[(\omega-k_n)^+\right]^2dx\right)^{\frac{s}{2}}\int_{0}^{T}\io|\nabla(\omega-k_n)^+|^{s}dxdt\nonumber\\
&&+c\left(\sup_{0\leq t\leq T}\io\left[(\omega-k_n)^+\right]^2dx\right)^{\frac{s}{2}}\left(\int_{0}^{T}A_{\Omega,k}^2(t)dt\right)^{\frac{s}{2}}Y_n^{\frac{2-s}{2}}\nonumber\\
&\leq &c\left(\sup_{0\leq t\leq T}\io\left[(\omega-k_n)^+\right]^2dx\right)^{\frac{s}{2}}\left(\ioT|\nabla(\omega-k_n)^+|^{2}dxdt\right)^{\frac{s}{2}}Y_n^{1-\frac{s}{2}}\nonumber\\
&&+c\left(\sup_{0\leq t\leq T}\io\left[(\omega-k_n)^+\right]^2dx\right)^{s}Y_n^{\frac{2-s}{2}}\nonumber\\
&\leq &c\left(\int_{0}^{T}\int_{\Omega_{n}(t)}|D_{t}\nabla u-u\q_t|^2dxdt\right)^sY_n^{1-\frac{s}{2}}\nonumber\\
&\leq &c\|D_{t}\nabla u-u\q_t\|_{2q,\ot}^{2s}Y_n^{1+\frac{s(q-2)}{2q}},\ \ \ \label{vpt1}
\end{eqnarray}
where $q>2$ is given as in the lemma.
On the other hand, we have
\begin{equation*}
\ioT\left[(\omega-k_n)^+\right]^{2s}dxdt\geq (k_{n+1}-k_n)^{2s}Y_{n+1}=\frac{k^{2s}}{4^{s(n+1)}}Y_{n+1}.
\end{equation*}
Combining this with \eqref{vpt1} yields
\begin{equation*}
Y_{n+1}\leq \frac{c4^{sn}\|D_{t}\nabla u-u\q_t\|_{2q,\ot}^{2s}}{k^{2s}}Y_n^{1+\frac{s(q-2)}{2q}}.
\end{equation*}
By Lemma \ref{ynb},
we have
\begin{equation}\label{r1}
\lim_{n\rightarrow \infty}Y_n=0,
\end{equation}
provided that
\begin{equation*}
Y_0\leq c\left(\frac{k^{2s}}{\|D_{t}\nabla u-u\q_t\|_{2q,\ot}^{2s}}\right)^{\frac{2q}{s(q-2)}}.
\end{equation*}
In view of \eqref{cnk} and \eqref{r1}, if we take $k=c\|D_{t}\nabla u-u\q_t\|_{2q,\ot}+2\|\omega(x,0)\|_{\infty,\Omega}$, then
\begin{equation*}
\omega\leq k.
\end{equation*}
Subsequently,
\begin{eqnarray*}
\|\omega\|_{\infty,\ot}&\leq &c\|\nabla u\|_{\infty,\ot}\|D_{t}\|_{2q,\ot}+c\|\q_t\|_{2q,\ot}+c\nonumber\\
&\leq &c(\|\nabla u\|_{\infty,\ot}+1)\|\nabla \beta\|_{2q,\ot}+c\nonumber\\
&\leq &c(\|\nabla u\|_{\infty,\ot}+1)\|\omega\|_{2q,\ot}+c.
\end{eqnarray*}
The last step is due to \eqref{pplr}.
%
In view of the interpolation inequality (\cite{GT}, p.146), we have
\begin{equation*}
\|\omega\|_{2q,\ot}\leq \varepsilon \|\omega\|_{\infty,\ot}+\frac{1}{\varepsilon^{q-1}}\|\omega\|_{2,\ot}.
\end{equation*}
By choosing $\varepsilon$ suitably, we arrive
at
\begin{eqnarray*}
\|\omega\|_{\infty,\ot}&\leq &c\|\omega\|_{2,\ot}(\|\nabla u\|_{\infty,\ot}+1)^{q}+c.
\end{eqnarray*}
Use Lemma \ref{vptb1} to yield the desired result. 
\end{proof}

We are ready to prove the main theorem
\begin{proof}[Proof of the Main Theorem]
By Theorem \ref{keyth}, the function $\psi=\vp^j=(D\nabla u\cdot\nabla u)^j$ satisfies
\begin{equation}\label{beer11}
\frac{1}{\psi}\psi_t-\textup{div}\left(\frac{1}{\psi}D\nabla\psi\right)= \frac{1}{\psi}\mathbf{H}\cdot\nabla\psi+jh+j\textup{div}\mathbf{F}\ \ \mbox{in $\{|\nabla u|>0\}$}
\end{equation}
 \begin{equation*}
 Q_{R_n}(z_0)=B_{R_n}(x_0)\times(t_0-R_n^2, t_0],
 \end{equation*}
 where
 \begin{equation*}
 R_n=\frac{ R}{2}+\frac{R}{2^{n+1}}\ \,\ n=0,1,2,\cdots.
 \end{equation*}
  Choose a sequence of smooth functions $\theta_n$ so that
 \begin{eqnarray*}
 \theta_n(x,t)&=& 1 \ \ \mbox{in $Q_{R_n}(z_0)$},\\
 \theta_n(x,t)&=&0\ \ \mbox{outside $B_{R_{n-1}}(x_0)$ and $t<t_0-R_n^2$},\\
 |\partial_t\theta_n(x,t)&|\leq &\frac{c4^n}{R^2}\ \ \mbox{on $Q_{R_{n-1}}(z_0)$},\\
 |\nabla \theta_n(x,t)|&\leq & \frac{c2^n}{R}\ \ \mbox{on $Q_{R_{n-1}}(z_0)$,}\ \ \ \mbox{and}\\
 0&\leq &\theta_n(x,t)\leq 1\ \ \mbox{on $Q_{R_{n-1}}(z_0)$.}
 \end{eqnarray*}
 Let
 \begin{equation*}
 Q_0=\{(x,t)\in Q_{R_0}(z_0):\psi(x,t)\geq 1\}.
 \end{equation*}
Select $\frac{K}{2}\geq 1$
 as below.
 Set
 \begin{equation*}
 K_n=K-\frac{K}{2^{n+1}},\ \ \ n=0,1,2,\cdots.
 \end{equation*}
 It follows that
 \begin{equation}\label{conk}
 1\leq\frac{K}{2}\leq K_n\leq K.
 \end{equation}
 We use $\theta_{n+1}^2(\psi-K_{n+1})^+$  as a test function in \eqref{beer11} to obtain
\begin{eqnarray}
\lefteqn{\frac{d}{dt}\io\int_{0}^{\ln\psi}(e^s-K_{n+1})^+ds\theta_{n+1}^2dx+\io\frac{1}{\psi} D\nabla \psi\cdot\nabla (\psi-K_{n+1})^+\theta_{n+1}^2dx}\nonumber\\
&=&2\io\int_{0}^{\ln\psi}(e^s-K_{n+1})^+ds\theta_{n+1}\partial_t\theta_{n+1}dx-2\io\frac{1}{\psi}  D\nabla \psi\cdot\nabla\theta_{n+1}(\psi-K_{n+1})^+\theta_{n+1}dx\nonumber\\
&&+\io\frac{1}{\psi}\mathbf{H}\cdot\nabla\psi\theta_{n+1}^2(\psi-K_{n+1})^+dx
+j\io h\theta_{n+1}^2(\psi-K_{n+1})^+dx\nonumber\\
&&-j\io \mathbf{F}\cdot\nabla (\psi-K_{n+1})^+\theta_{n+1}^2dx-2j\io\mathbf{F} \cdot\nabla\theta_{n+1}(\psi-K_{n+1})^+\theta_{n+1}dx.\label{wine6}
\end{eqnarray}
We easily evaluate
\begin{equation*}
\int_{0}^{\ln\psi}(e^s-K_{n+1})^+ds=(\psi-K_{n+1})^+-K_{n+1}(\ln\psi-\ln K_{n+1})^+.
\end{equation*}
We claim that
\begin{equation}\label{haha1}
(\psi-K_{n+1})^+-K_{n+1}(\ln\psi-\ln K_{n+1})^+\geq \left[\left(\sqrt{\psi}-\sqrt{K_{n+1}}\right)^+\right]^2.
\end{equation}
To see this, we consider the function
\begin{equation*}
g(s)=\frac{2}{\sqrt{K_{n+1}}}\left(\sqrt{s}-\sqrt{K_{n+1}}\right)-\ln s+\ln K_{n+1}\ \ \mbox{on $[K_{n+1},\infty)$.}
\end{equation*}
A simple calculation shows that
\begin{equation*}
g(K_{n+1})= 0 \ \ \mbox{and}\ \
g^\prime(s)=\frac{1}{\sqrt{s}}\left(\frac{1}{\sqrt{K_{n+1}}}-\frac{1}{\sqrt{s}}\right)> 0 \ \ \mbox{for $s>K_{n+1}$.}
\end{equation*}
This immediately implies that
\begin{equation*}
\frac{2}{\sqrt{K_{n+1}}}\left(\sqrt{\psi}-\sqrt{K_{n+1}}\right)^+\geq \left(\ln \psi-\ln K_{n+1}\right)^+.
\end{equation*}
It is not difficult to see that this inequality is equivalent to \eqref{haha1}.

Set
\begin{eqnarray}
S_{n+1}(t)&=&\{x\in B_n(x_0): \psi(x, t)\geq K_{n+1}\},\\
Q_{n+1}&=&\{(x,t)\in Q_{R_n}(z_0), \psi(x,t)\geq K_{n+1}\}=\cup_{0\leq t\leq T}S_{n+1}(t)\subset Q_{R}(z_0).
\end{eqnarray}
Then we have
\begin{eqnarray*}
\nabla\psi&=&\nabla (\psi-K_{n+1})^+\ \ \mbox{on $S_{n+1}(t)$},\\
|Q_{n+1}|&=&\int_{t_0-R_n^2}^{t_0}|S_{n+1}(t)|dt.
\end{eqnarray*}
By \eqref{ellip}, we have
\begin{eqnarray*}
D\nabla \psi\cdot\nabla (\psi-K_{n+1})^+&=&(a|\q|+m)|\nabla (\psi-K_{n+1})^+|^2+\frac{b-a}{|\q|}(\q\cdot\nabla (\psi-K_{n+1})^+)^2,\\
 D\nabla \psi(\psi-K_{n+1})^+&=&(a|\q|+m)\nabla (\psi-K_{n+1})^+(\psi-K_{n+1})^+\nonumber\\
 &&+\frac{b-a}{|\q|}(\q\cdot\nabla (\psi-K_{n+1})^+)(\psi-K_{n+1})^+\q.
\end{eqnarray*}
Integrate \eqref{wine6} with respect to t and then incorporate \eqref{haha1} and the preceding two equations into the resulting equation to deduce
\begin{eqnarray}
\lefteqn{\io\left[\left(\sqrt{\psi}-\sqrt{K_{n+1}}\right)^+\right]^2\theta_{n+1}^2dx+ \iot\frac{1}{\psi}(a|\q|+m)|\nabla (\psi-K_{n+1})^+|^2 \theta_{n+1}^2dxdt}\nonumber\\
&\leq&\frac{c4^n}{R^2}\int_{Q_{R_n}(z_0)}(\psi-K_{n+1})^+dxdt+\frac{c4^n}{R^2}\int_{Q_{R_n}(z_0)}\frac{(a|\q|+m)}{\psi} \left[(\psi-K_{n+1})^+\right]^2dxdt\nonumber\\
&&+\iot\frac{c}{\psi}|\mathbf{H}|^2\theta_{n+1}^2\left[(\psi-K_{n+1})^+\right]^2dxdt
+c\iot| h|\theta_{n+1}^2(\psi-K_{n+1})^+dxdt\nonumber\\
&&+c\int_{t_0-R_n^2}^{\tau}\int_{S_{n+1}(t)} \psi|\mathbf{F}|^2\theta_{n+1}^2dxdt+\frac{c2^n}{R}\iot|\mathbf{F}| (\psi-K_{n+1})^+\theta_{n+1}dxdt,\label{wine12}
\end{eqnarray}
where $\Omega_\tau=\Omega\times(0,\tau)$ for $\tau\in (0, t_0]$.
The last term in \eqref{wine12} can be absorbed into the remaining terms on the right-hand side of \eqref{wine12} because we have the following estimate
\begin{eqnarray}
\frac{2^n}{R}\iot|\mathbf{F}| (\psi-K_{n+1})^+\theta_{n+1}dxdt&\leq& \frac{c4^n}{R^2}\int_{t_0-R_n^2}^{\tau}\int_{S_{n+1}(t)}\frac{1}{\psi} \left[(\psi-K_{n+1})^+\right]^2dxdt\nonumber\\
&&+c\int_{t_0-R_n^2}^{\tau}\int_{S_{n+1}(t)} \psi|\mathbf{F}|^2\theta_{n+1}^2dxdt,\ \ \tau\in(t_0-R_n^2, t_0).\label{r12}
\end{eqnarray}
Set 
\begin{equation}
w=\theta_{n+1}\left(\sqrt{\psi}-\frac{K_{n+1}}{\sqrt{\psi}}\right)^+.
\end{equation}
Thus
\begin{eqnarray}
w&\leq&\theta_{n+1}\sqrt{\psi}\chi_{Q_{n+1}},\label{r15}\\
|\nabla w|&\leq &\left|\theta_{n+1}\left(\frac{1}{2}\psi^{-\frac{1}{2}}+\frac{K_{n+1}}{2\psi}\psi^{-\frac{1}{2}}\right)\nabla\psi\right|\chi_{Q_{n+1}}+\frac{c2^n}{R}\sqrt{\psi}\chi_{Q_{n+1}}\nonumber\\
&\leq&\theta_{n+1}\psi^{-\frac{1}{2}}\left|\nabla\psi\right|\chi_{Q_{n+1}}+\frac{c2^n}{R}\sqrt{\psi}\chi_{Q_{n+1}}.\label{r17}
\end{eqnarray}
Here we have used the fact that $\psi\geq K_{n+1}$ on $Q_{n+1}$.
By virtue of \eqref{kin} and \eqref{kcl}, we derive
\begin{eqnarray}
\lefteqn{
	\int_{B_{R_n}(x_0)}\frac{1}{\psi}|\mathbf{H}|^2\theta_{n+1}^2\left[(\psi-K_{n+1})^+\right]^2dx
=\int_{B_{R_n}(x_0)}|\mathbf{H}|^2w^2dx}\nonumber\\
&\leq &\eta(|\mathbf{H}|^2; R_n; B_{R_n}(x_0))\left(\int_{B_{R_n}(x_0)}|\nabla w|^2dx+\frac{1}{R_n^2}\int_{B_{R_n}(x_0)}w^2dx\right)\nonumber\\
&\leq &cR_n^{2-\ep}\|\mathbf{H}\|_{2p, B_{R}(x_0)}^2\left(\int_{B_{R_n}(x_0)}|\nabla w|^2dx+\frac{1}{R_n^2}\int_{B_{R_n}(x_0)}w^2dx\right)\nonumber\\
&= &\alpha\int_{B_{R_n}(x_0)}|\nabla w|^2dx+\frac{c^{\frac{2}{2-\ep}}\|\mathbf{H}\|_{2p, B_{R}(x_0)}^{\frac{4}{2-\ep}}}{\alpha^{\frac{2}{2-\ep}}}\int_{B_{R_n}(x_0)}w^2dx,\label{r14}
\end{eqnarray}
where $p>1, \ep\in(0,1)$ are given as in \eqref{kcl} and $\alpha>0$.
We can conclude from \eqref{r14}, \eqref{r15}, and \eqref{r17} that
\begin{eqnarray}
\lefteqn{
	\int_{B_{R_n}(x_0)}\frac{1}{\psi}|\mathbf{H}|^2\theta_{n+1}^2\left[(\psi-K_{n+1})^+\right]^2dx}\nonumber\\
&\leq&\alpha\int_{S_{n+1}(t)}\theta_{n+1}^2\psi^{-1}\left|\nabla\psi\right|^2dx+\frac{c4^n}{R^2}\int_{S_{n+1}(t)}\psi dx+\frac{c\|\mathbf{H}\|_{2p, B_{R}(x_0)}^{\frac{4}{2-\ep}}}{\alpha^{\frac{2}{2-\ep}}}\int_{S_{n+1}(t)}\theta_{n+1}^2\psi dx.
\end{eqnarray}
Similarly,
\begin{eqnarray}
\int_{S_{n+1}(t)} \psi|\mathbf{F}|^2\theta_{n+1}^2dx
&=&\int_{S_{n+1}(t)}\left[\left(\sqrt{\psi}-\sqrt{K_{n+1}}\right)^++\sqrt{K_{n+1}}\right]^2|\mathbf{F}|^2\theta_{n+1}^2dx\nonumber\\
&\leq&2\int_{B_{R_n}(x_0)}\left[\left(\sqrt{\psi}-\sqrt{K_{n+1}}\right)^+\right]^2|\mathbf{F}|^2\theta_{n+1}^2dx\nonumber\\
&&+2K_{n+1}\int_{S_{n+1}(t)}|\mathbf{F}|^2\theta_{n+1}^2dx\nonumber\\
&\leq&\alpha\int_{S_{n+1}(t)}\theta_{n+1}^2\psi^{-1}\left|\nabla\psi\right|^2dx+\frac{c\alpha4^n}{R^2}\int_{S_{n+1}(t)}\psi dx\nonumber\\
&&+\frac{c\|\mathbf{F}\|_{2p, B_{R}(x_0)}^{\frac{4}{2-\ep}}}{\alpha^{\frac{2}{2-\ep}}}\int_{S_{n+1}(t)}\theta_{n+1}^2\psi dx\nonumber\\
&&+cK_{n+1}\|\mathbf{F}\|_{2p, B_{R}(x_0)}^{2}|S_{n+1}(t)|^{\frac{p-1}{p}},\\
\int_{B_{R_n}(x_0)}| h|\theta_{n+1}^2(\psi-K_{n+1})^+dx&\leq &\int_{S_{n+1}(t)} \psi| h|\theta_{n+1}^2dx\nonumber\\
&\leq&\alpha\int_{S_{n+1}(t)}\theta_{n+1}^2\psi^{-1}\left|\nabla\psi\right|^2dx+\frac{c\alpha4^n}{R^2}\int_{S_{n+1}(t)}\psi dx\nonumber\\
&&+\frac{c\|h\|_{p, B_{R}(x_0)}^{\frac{2}{2-\ep}}}{\alpha^{\frac{2}{2-\ep}}}\int_{S_{n+1}(t)}\theta_{n+1}^2 \psi dx\nonumber\\
&&+cK_{n+1}\|h\|_{p, B_{R}(x_0)}|S_{n+1}(t)|^{\frac{p-1}{p}}.
\end{eqnarray}
Use the preceding three estimates in \eqref{wine12}, select $\alpha$ suitably small in the resulting inequality,  and thereby obtain
\begin{eqnarray}
\lefteqn{\io\left[\left(\sqrt{\psi}-\sqrt{K_{n+1}}\right)^+\right]^2\theta_{n+1}^2dx+ \iot\frac{1}{\psi}|\nabla (\psi-K_{n+1})^+|^2 \theta_{n+1}^2dxdt}\nonumber\\
&\leq&\frac{c4^n}{R^2}\int_{Q_{n+1}}\psi dxdt+\frac{c4^n\|(a|\q|+m)\|_{\infty, Q_0}}{R^2}\int_{Q_{R_n}(z_0)}\frac{1}{\psi} \left[(\psi-K_{n+1})^+\right]^2dxdt\nonumber\\
&&+c\sup_{0\leq t\leq T}\left(\|\mathbf{H}\|_{2p, B_{R}(x_0)}^{\frac{4}{2-\ep}}+\|\mathbf{F}\|_{2p, B_{R}(x_0)}^{\frac{4}{2-\ep}}+\|h\|_{p, B_{R}(x_0)}^{\frac{2}{2-\ep}}\right)\int_{Q_{n+1}}\psi dxdt\nonumber\\
&&+cK_{n+1}\sup_{0\leq t\leq T}\left(\|h\|_{p, B_{R}(x_0)}+\|\mathbf{F}\|_{2p, B_{R}(x_0)}^{2}\right)\int_{t_0-R_n^2}^{t_0}|S_{n+1}(t)|^{\frac{p-1}{p}}dt\nonumber\\
&\leq&\frac{c(4^n+R^2)\Gamma}{R^2}\int_{Q_{n+1}}\psi dxdt+\frac{c4^n\Gamma}{R^2}\int_{Q_{R_n}(z_0)}\frac{1}{\psi} \left[(\psi-K_{n+1})^+\right]^2dxdt\nonumber\\
&&+cK_{n+1}\Gamma|Q_{n+1}|^{\frac{p-1}{p}}.\label{rwine12}
\end{eqnarray}
where
\begin{equation}\label{gmdef}
\Gamma=\max\left\{1,\|\q\|_{\infty, Q_1}, \sup_{t_0-R^2\leq t\leq t_0}\left(\|\mathbf{H}\|_{2p, B_{R}(x_0)}^{\frac{4}{2-\ep}}+\|\mathbf{F}\|_{2p, B_{R}(x_0)}^{\frac{4}{2-\ep}}+\|h\|_{p, B_{R}(x_0)}^{\frac{2}{2-\ep}}\right) \right\}.
\end{equation}
Observe that
\begin{eqnarray*}
\frac{1}{\psi} |\nabla (\psi-K_{n+1})^+|^2&=&	\frac{1}{\psi} |\nabla \psi|^2\chi_{S_{n+1}(t)}=4|\nabla\sqrt{\psi}|^2\chi_{S_{n+1}(t)}
=4|\nabla(\sqrt{\psi}-\sqrt{K_{n+1}})^+|^2,\\
\frac{1}{\psi} \left[(\psi-K_{n+1})^+\right]^2&=&\frac{1}{\psi}\left[\left(\sqrt{\psi}-\sqrt{K_{n+1}}\right)^+\right]^2\left(\sqrt{\psi}+\sqrt{K_{n+1}}\right)^2\nonumber\\
&=&\left[\left(\sqrt{\psi}-\sqrt{K_{n+1}}\right)^+\right]^2\left(1+\frac{\sqrt{K_{n+1}}}{\sqrt{\psi}}\right)^2
\leq 4\left[\left(\sqrt{\psi}-\sqrt{K_{n+1}}\right)^+\right]^2.
\end{eqnarray*}
We also have
\begin{eqnarray*}
\frac{\sqrt{K_{n+1}}-\sqrt{K_{n}}}{\sqrt{K_{n+1}}}&=&\frac{\sqrt{1-\frac{1}{2^{n+2}}}-\sqrt{1-\frac{1}{2^{n+1}}}}{\sqrt{1-\frac{1}{2^{n+2}}}}\nonumber\\
&=&\frac{1}{2^{n+2}\left(\sqrt{1-\frac{1}{2^{n+2}}}+\sqrt{1-\frac{1}{2^{n+1}}}\right)\sqrt{1-\frac{1}{2^{n+2}}}}
\geq \frac{1}{2^{n+3}}.
\end{eqnarray*}
With this in mind, we estimate
\begin{eqnarray}
\left[\left(\sqrt{\psi}-\sqrt{K_{n}}\right)^+\right]^2&\geq& \psi\left[\left(1-\frac{\sqrt{K_{n}}}{\sqrt{\psi}}\right)^+\right]^2\chi_{S_{n+1}(t)}\nonumber\\
&\geq &\psi\left(1-\frac{\sqrt{K_{n}}}{\sqrt{\sqrt{K_{n+1}}}}\right)^2\chi_{S_{n+1}(t)}\geq\frac{1}{2^{2(n+3)}}\psi\chi_{S_{n+1}(t)}.\label{r20}
\end{eqnarray}
Plugging the preceding results into \eqref{rwine12}, we obtain
\begin{equation}\label{wine14}
J_n+\ioT\left|\nabla\left(\sqrt{\psi}-\sqrt{K_{n+1}}\right)^+\right|^2\theta_{n+1}^2dxdt
\leq\frac{c8^n(1+R^2)\Gamma}{R^2}y_n+cK_{n+1}\Gamma|Q_{n+1}|^{\frac{p-1}{p}},
\end{equation}
where
\begin{eqnarray}
J_n&=&\sup_{t_0-R_n^2\leq t\leq t_0}\io\left|\left(\sqrt{\psi}-\sqrt{K_{n+1}}\right)^+\right|^2\theta_{n+1}^2dx,\ \ \mbox{and}\\
y_n&=&\int_{Q_{R_n}(z_0)}\left[\left(\sqrt{\psi}-\sqrt{K_{n}}\right)^+\right]^{2}dxdt
.
\end{eqnarray}
Fix a number $r\in (1,2)$.
We derive, with the aid of Poincar\'{e}'s inequality and \eqref{wine14}, that
\begin{eqnarray}
\lefteqn{\int_{t_0-R_n^2}^{t_0}\io\left((\sqrt{\psi}-\sqrt{K_{n+1}})^+\theta_{n+1}\right)^{2r}dxdt}\nonumber\\
&\leq &\int_{t_0-R_n^2}^{t_0}\left(\io\left((\sqrt{\psi}-\sqrt{K_{n+1}})^+\theta_{n+1}\right)^{2}dx\right)^{\frac{r}{2}}\left(\io\left((\sqrt{\psi}-\sqrt{K_{n+1}})^+\theta_{n+1}\right)^{\frac{2r}{2-r}}dx\right)^{\frac{2-r}{2}}dt\nonumber\\
&\leq &J_n^{\frac{r}{2}}\int_{t_0-R_n^2}^{t_0}\left(\io\left((\sqrt{\psi}-\sqrt{K_{n+1}})^+\theta_{n+1}\right)^{\frac{2r}{2-r}}dx\right)^{\frac{2-r}{2}}dt \nonumber\\
&\leq &cJ_n^{\frac{r}{2}}\int_{Q_{R_n}(z_0)}\left|\nabla\left((\sqrt{\psi}-\sqrt{K_{n+1}})^+\theta_{n+1}\right)\right|^{r}dxdt \nonumber\\
&\leq &cJ_n^{\frac{r}{2}}\left(\int_{Q_{R_n}(z_0)}\left|\nabla\left((\sqrt{\psi}-\sqrt{K_{n+1}})^+\theta_{n+1}\right)\right|^{2}dxdt\right)^{\frac{r}{2}} |Q_{n+1}|^{1-\frac{r}{2}}\nonumber\\
&\leq &cJ_n^{\frac{r}{2}}\left(\int_{Q_{R_n}(z_0)}\left|\nabla(\sqrt{\psi}-\sqrt{K_{n+1}})^+\right|^{2}\theta_{n+1}^2dxdt \right)^{\frac{r}{2}}|Q_{n+1}|^{1-\frac{r}{2}}\nonumber\\
&&+cJ_n^{\frac{r}{2}}\left(\frac{c4^{n}}{R^2}\int_{Q_{R_n}(z_0)}\left[\left(\sqrt{\psi}-\sqrt{K_{n+1}}\right)^+\right]^{2}dxdt\right)^{\frac{r}{2}}|Q_{n+1}|^{1-\frac{r}{2}}\nonumber\\
&\leq &\left(\frac{c8^n(1+R^2)\Gamma}{R^2}\right)^{r} y_n^{r}|Q_{n+1}|^{1-\frac{r}{2}}+cK_{n+1}^{^r}\Gamma^r|Q_{n+1}|^{\frac{(p-1)r}{p}+1-\frac{r}{2}}\nonumber\\
&&+
\left(\frac{c8^n(1+R^2)}{R^2}\right)^{r}\Gamma^{\frac{r}{2}}y_n^{r}|Q_{n+1}|^{1-\frac{r}{2}}+\left(\frac{c4^{n}}{R^2}\right)^{\frac{r}{2}}\Gamma^{\frac{r}{2}}K_{n+1}^{\frac{r}{2}}y_n^{\frac{r}{2}}|Q_{n+1}|^{\frac{(p-1)r}{2p}+1-\frac{r}{2}}.
\label{wine16}
\end{eqnarray}
Therefore,
\begin{eqnarray}
y_{n+1}&\leq&\int_{t_0-R_n^2}^{t_0}\io\left((\sqrt{\psi}-\sqrt{K_{n+1}})^+\theta_{n+1}\right)^{2}dxdt\nonumber\\
&\leq&\left(\int_{t_0-R_n^2}^{t_0}\io\left((\sqrt{\psi}-\sqrt{K_{n+1}})^+\theta_{n+1}\right)^{2r}dxdt\right)^{\frac{1}{r}}|Q_{n+1}|^{1-\frac{1}{r}}\nonumber\\
&\leq&\frac{c8^n(1+R^2)\Gamma}{R^2}y_n|Q_{n+1}|^{\frac{1}{2}}+cK\Gamma|Q_{n+1}|^{\frac{p-1}{p}+\frac{1}{2}}\nonumber\\
&&+\left(\frac{c4^{n}}{R^2}\right)^{\frac{1}{2}}\Gamma^{\frac{1}{2}}K^{\frac{1}{2}}y_n^{\frac{1}{2}}|Q_{n+1}|^{\frac{p-1}{2p}+\frac{1}{2}}.\label{r23}
\end{eqnarray}
We easily see that
\begin{eqnarray}
y_n\geq \int_{Q_{n+1}}\left(\sqrt{K_{n+1}}-\sqrt{K_{n}}\right)^{2}dxdt\geq\frac{K}{2^{2(n+3)}}|Q_{n+1}|.
\end{eqnarray}
Consequently,
\begin{equation}
K|Q_{n+1}|^{\frac{(p-1)}{p}+\frac{1}{2}} \leq 
c2^{2(n+3)}y_n|Q_{n+1}|^{\frac{p-1}{p}-\frac{1}{2}}
\end{equation}
Use this in \eqref{r23} to obtain
\begin{eqnarray}
y_{n+1}&\leq& \frac{c8^n(1+R^2)\Gamma}{R^2}y_n\left(|Q_{n+1}|^{\frac{1}{2}}+|Q_{n+1}|^{\frac{p-1}{p}-\frac{1}{2}}+|Q_{n+1}|^{\frac{p-1}{2p}}\right)\nonumber\\
&\leq&\frac{cd^n(1+R^2)\Gamma}{R^2}y_n|Q_{n+1}|^{\frac{p-2}{2p}}
\leq \frac{cd^n(1+R^2)\Gamma}{R^2K^{\frac{p-2}{2p}}}y_n^{1+\frac{p-2}{2p}},
\end{eqnarray}
where $d>1$.
To continue, we must further assume
\begin{equation}\label{conp}
p>2.
\end{equation}In view of Lemma \ref{ynb}, if we choose $K$ so large that
\begin{equation*}
y_0\leq cK\left(\frac{R^2}{(1+R^2)\Gamma}\right)^{\frac{2p}{p-2}}
\end{equation*} then we have
\begin{equation*}
\sup_{Q_{\frac{R}{2}}(z_0)}\psi\leq K.
\end{equation*} 
On account of \eqref{conk}, it is enough for us to take
\begin{equation}
K=c\left(\frac{1+R^2}{R^2}\right)^{\frac{2p}{p-2}}y_0\Gamma^{\frac{2p}{p-2}}+2.
\label{wine19}
\end{equation}

Now we proceed to estimate $ \mathbf{H}, h, \mathbf{F}$.
We first observe from \eqref{ddf} that
\begin{equation*}
|D|\leq b|\q|+m.
\end{equation*}
Subsequently,
\begin{eqnarray*}
|D_i|&\leq & (c_1|\q|^2+c_2)|\nabla u|,\ \ i=1,2,\\
|D_3|&\leq &(c_1|\q|^2+c_2)|\nabla u|^2.
\end{eqnarray*}
It follows from \eqref{ellip} and \eqref{dpx} that
\begin{equation*}
|\mathbf{G}|\leq c|\nabla^2v|.
\end{equation*}
Note that
\begin{eqnarray*}
\mdet(D)=(a|\q|+m)(b|\q|+m).
\end{eqnarray*}
Thus we have
\begin{equation*}
|\nabla\mdet(D)|\leq c(|\q|+1)|\nabla^2v|.
\end{equation*}
We are ready to estimate
\begin{eqnarray*}
|\mathbf{F}|&=&\left|-D\mathbf{G}+2a^{-1}(\mdiv D\nabla u-\nabla u\cdot\q)D\nabla u\right|\nonumber\\
&\leq &c(|\q|+1)|\nabla^2 v|+c(|\q|^2+|\q|).
\end{eqnarray*}
Note that
\begin{equation*}
\psi\geq 1\ \ \mbox{if and only if}\ \ \vp\geq 1.
\end{equation*}
By \eqref{ellip}, we have
\begin{equation*}
|\nabla u |^2\geq \frac{1}{b|\q|+m}\ \ \mbox{in $Q_1$,}
\end{equation*}
from whence follows
\begin{equation*}
|\vp^{-1}u_tD\nabla u|\leq \frac{c|u_t|(b|\q|+m)}{|\nabla u|}\leq c|u_t|(b|\q|+m)^{\frac{3}{2}}\ \  \mbox{in $Q_1$.}
\end{equation*}
We are in a position to estimate
\begin{eqnarray*}
\left|\mathbf{H}\right|	&=&\left|D\mathbf{G}+2\vp^{-1}u_tD\nabla u+\frac{1}{\textup{det}(D)\vp}\left(
D_1^T\nabla\mdet(D),
D_2^T\nabla\mdet(D)
\right)\nabla u\right|\nonumber\\
&\leq& c(|\q|+1)|\nabla^2 v|+c|u_t|(b|\q|+m)^{\frac{3}{2}}\ \  \mbox{in $Q_1$.}
\end{eqnarray*}
As for $h$, we first note that all our previous calculations are still valid if we drop the two non-positive terms in $h$. Keeping this in mind, we estimate
\begin{eqnarray*}
|h|&\leq&\left|-\frac{1}{\textup{det}(D)\vp}\nabla\mdet(D)\cdot\left(D_1\mathbf{G}, D_2\mathbf{G}\right)\nabla u+\frac{2(u_t-\mdiv D\nabla u+\nabla u\cdot\q)}{\textup{det}(D)\vp^2}\nabla\mdet(D)\cdot D_3\nabla u\right|\nonumber\\
&&+\left|-2\vp^{-1}u_t\nabla u\q+\vp^{-1}D_t\nabla u\cdot\nabla u-2\vp^{-1}(u_t-\mdiv D\nabla u+\nabla u\cdot\q)D\nabla u\cdot \mathbf{G}\right|\nonumber\\
&\leq &c(|\q|+1)|\nabla^2 v|^2+c|u_t|(b|\q|+m)^{\frac{3}{2}}|\nabla^2v|+c|\q|(|\q|+1)|\nabla^2 v|^2+c|u_t|(b|\q|+m)^{\frac{3}{2}}\nonumber\\
&&+c|\nabla v_t|+c|\q|(|\q|+1)|\nabla^2 v|.
\end{eqnarray*}
With the aid of \eqref{qlpb} , for $t\in (t_0-R^2, t_0)$ we estimate that
\begin{eqnarray}
\left(\int_{S_1(t)}|\mathbf{H}|^{2p}dx\right)^{\frac{1}{p}}&\leq &c\left(\int_{S_1(t)}\left[(|\q|+1)|\nabla^2 v|\right]^{2p}dx\right)^{\frac{1}{p}}\nonumber\\
&&+c\left(\int_{S_1(t)}\left[|u_t|(b|\q|+m)^{\frac{3}{2}}\right]^{2p}dx\right)^{\frac{1}{p}}\nonumber\\
&\leq &c\|b|\q|+m\|_{\infty, Q_1}^2\|\nabla^2v\|_{2p,S_1(t)}^2+c\|u_t\|_{2p,S_1(t)}^2\|b|\q|+m\|_{\infty, Q_1}^3.
\end{eqnarray}
Similarly, 
\begin{eqnarray*}
\|\mathbf{F}\|_{2p,S_1(t)}&\leq &c\|b|\q|+m\|_{\infty, Q_1}\|\nabla^2v\|_{2p,S_1(t)}^2+c,\\
\|h\|_{p,S_1(t)}&\leq &c\|b|\q|+m\|_{\infty, Q_1}^3\|\nabla^2v\|_{2p,S_1(t)}^2+c\|u_t\|_{2p,S_1(t)}^2+c\|\nabla v_t\|_{p,S_1(t)}+c.
\end{eqnarray*}
Set
\begin{equation}\label{godef}
\Gamma_{1, Q_{R}(z_0)}=\max\left\{1, \|\q\|_{\infty, Q_{R}(z_0)}, \sup_{t_0-R^2\leq t\leq t_0}\left(\|\nabla^2v\|_{2p,B_{R}(x_0)}^2+\|u_t\|_{2p,B_{R}(x_0)}^2+\|\nabla v_t\|_{p,B_{R}(x_0)}\right)\right\}.
\end{equation}
Obviously, we can find a positive number $\ell$ large enough so that
\begin{equation*}
\Gamma^{\frac{2p}{p-2}}\leq G_{1,Q_{R}(z_0)}^\ell.
\end{equation*}
Recall that
\begin{equation*}
y_0=\int_{Q_{ R}(z_0)}\left[\left(\sqrt{\psi}-\sqrt{\frac{K}{2}}\right)^+\right]^{2}dxdt
\leq \|\vp\|_{j, Q_{ R}(z_0)}^j.
\end{equation*}
Collecting the preceding estimates in \eqref{wine19} and taking the $j^{\mbox{th}}$ root of the resulting inequality, we arrive at
\begin{equation}\label{wine20}
\sup_{Q_{\frac{R}{2}}(z_0)} \vp\leq c \left(\frac{1+R^2}{R^2}\right)^{\frac{2p}{(p-2)j}}\|\vp\|_{j, Q_{ R}(z_0)}G_{1,Q_{R}(z_0)}^{\frac{\ell}{j}}+c.
\end{equation}
 We claim that we can extend the above estimate to the whole $\ot$. That is, we have
\begin{equation}\label{wine21}
\sup_{\ot} \vp\leq c \|\vp\|_{j, Q_{ R}(z_0)}G_{1,\ot}^{\frac{\ell}{j}}+c.
\end{equation}
To see this, we appeal to the classical technique of turning a boundary point into an interior point (\cite{GT}, p. 303). We shall give a brief outline here. Let the boundary curve $\po$ be covered by a finite number of overlapping arcs, each of which can be straightened into a segment
of $\eta_1=0$ by a suitable smooth diffeomorphism $(x_1,x_2)\rightarrow (\eta_1,\eta_2)$ defined in a neighborhood of the arc. Denote by $(\tilde{u}, \tilde{v})$ our solution $(u,v)$ in the new variables. Obviously, we have $ \tilde{u}_{\eta_1}(0,\eta_2,t)=0, \ \tilde{v}(0,\eta_2,t)=0$. 
We can extend $(\tilde{u}, \tilde{v})$ across the line $\eta_1=0$ by setting
\begin{equation}\label{ext}
\tilde{u}(-\eta_1,\eta_2,t)=	\tilde{u}(\eta_1,\eta_2,t),\ \ \tilde{v}(-\eta_1,\eta_2,t)=	-\tilde{v}(\eta_1,\eta_2,t).
\end{equation} 
Then the relevant boundary points become interior points. In the Appendix below (also see \cite{SW,X2}), we derive the equation  satisfied by $\tilde{u} $, i.e., equation \eqref{eut}.
Obviously, this equation
has the same basic structure as the original \eqref{gwe2}. 
We can easily check from \eqref{bc} below that 
the proof of \eqref{wine20} is still applicable here. 
Therefore, we can conclude that
  \eqref{wine20} is still valid for $z_0\in \Sigma_T$. If $t_0=0$, then we just need to change $Q_{R_n}(z_0)$ to $B_{R_n}\times[0, R_n^2)$ and require 
\begin{equation*}
K>\max\{2\max_{\Omega}\psi(x,0),2\}
\end{equation*}
instead of \eqref{conk} in the proof of \eqref{wine20}. Subsequently, \eqref{wine21} follows.

 We deduce from the classical Calder\'{o}n-Zygmund inequality, the Sobolev embedding theorem,  \eqref{pplr}, and Lemma \ref{plin} that
\begin{eqnarray}
\sup_{\ot} \vp &\leq & c\|\vp\|_{j, \ot}\left(\sup_{0\leq t\leq T}\left(\|\nabla u\|_{2p,\Omega}^2+\|u_t\|_{2p,\Omega}^2+\|u_t\|_{p,\Omega}\right)+1\right)^{\frac{\ell}{j}}+c\nonumber\\
&\leq & c\|\vp\|_{j, \ot}\left(\|\nabla u\|_{\infty,\ot}^{4+2q}+1\right)^{\frac{\ell}{j}}+c\nonumber\\
&\leq & c\|\vp\|_{j, \ot}\|\vp\|_{\infty,\ot}^{\frac{(q+2)\ell}{j}}+c\|\vp\|_{j, \ot}+c,
\end{eqnarray}
where $q>2$ is given as in Lemma \ref{plin}. On account of Lemma \ref{ue} and the interpolation inequality (\cite{GT}, p. 146), $\|\vp\|_{j, \ot}\leq \ep\|\vp\|_{\infty, \ot}+\frac{1}{\ep^{j-1}} \|\vp\|_{1, \ot}\leq \ep\|\vp\|_{\infty, \ot}+\frac{c}{\ep^{j-1}}$. It follows
\begin{equation}\label{r11}
\sup_{\ot} \vp\leq c\|\vp\|_{j, \ot}\|\vp\|_{\infty,\ot}^{\frac{(q+2)\ell}{j}}+c\leq  cT^{\frac{1}{j}}\|\vp\|_{\infty,\ot}^{1+\frac{(q+2)\ell}{j}}+c.
\end{equation} Evidently,  \eqref{r11} implies that $\vp\in L^\infty(\ot) $ whenever $\vp\in L^j(\ot)$ with $\frac{(q+2)\ell}{j}<1$. In view of \eqref{r22}, this is true if $\q$ satisfies \eqref{r30}. Unfortunately, we do not have \eqref{r30} at this point. So we will first show $\vp\in L^\infty(\ot) $ for $T$ suitably small. Then extend the solution in the time direction. To this end, we let
\begin{equation}\label{cont}
T\leq 1.
\end{equation}
Then we may assume that $c$ in \eqref{r11} is independent of $T$. To see this, we check our earlier proof and conclude
$c=c_1T^{\delta_0}+c_2$ for some $\delta_0\geq 0$. Of course, here 
 $\delta_0, c_1, c_2$ are independent of $T$. There is
another way to reach this conclusion. That is, consider the function $u_T(x,t)=u(x, Tt)$ on $\Omega_1$. We shall omit the details.

Let \eqref{cont} hold and set \begin{equation*}
\ep=cT^{\frac{1}{j}},\ \ \ \delta=\frac{(q+2)\ell}{j}.
\end{equation*}
Consider the function $g(s)=\ep s^{1+\delta}-s+c$ on $(0, \infty)$. Then \eqref{r11} is equivalent to $g(\|\vp\|_{\infty,\ot})\geq 0$. The function $g$ achieves its minimum value at $s_0=\frac{1}{\left[\ep(1+\delta)\right]^{\frac{1}{\delta}}}$. The minimum value
\begin{eqnarray*}
g(s_0)&=&\frac{\ep}{\left[\ep(1+\delta)\right]^{\frac{1+\delta}{\delta}}}-\frac{1}{\left[\ep(1+\delta)\right]^{\frac{1}{\delta}}}+c\nonumber\\
&=&c-\frac{\ep\delta}{\left[\ep(1+\delta)\right]^{\frac{1+\delta}{\delta}}}\leq -\ep,
\end{eqnarray*}
provided that $(c+\ep)\ep^{\frac{1}{\delta}} \leq \frac{\delta}{(1+\delta)^{\frac{1+\delta}{\delta}}}$. We further require that $\|\vp(\cdot,0)\|_{\infty,\Omega}\leq s_0$. Denote by $T_0$ the largest $T$ that satisfies the preceding two conditions and \eqref{cont}. Then we have
\begin{equation}\label{rr1}
\|\vp\|_{\infty,\Omega\times[0,T_0]}\leq s_0.
\end{equation}
 By the discussion at the end of this section, we can deduce from \eqref{rr1} that $u\in L^\infty(0,T; C^2(\overline{\Omega}))$. In view of (H2), this is enough for us to be able to extend the solution in the time direction as far away as we want (see \cite{X7} for more details).

So far we have assumed that there is a weak solution $(u, v)$ to \eqref{gwe1}-\eqref{gwe5} which is so regular that all the preceding calculations are legitimate. We can do so if we can show that the weak solution can be viewed as the limit of a sequence of smooth approximate solutions $(u_\varepsilon, v_\varepsilon), \varepsilon\in(0.1)$, and the preceding estimates for $(u, v)$ are satisfied by $(u_\varepsilon, v_\varepsilon)$ uniformly in $\varepsilon$. In the remaining part of the proof, we will show how we can achieve this. The idea is to design a suitable approximation scheme for \eqref{gwe1}-\eqref{gwe5}. The key is to find a way to approximate the term $|\q|$ without destroying the basic structure of the original system.
To do this, let $\zeta$ be a mollifier on $\mathbb{R}^{3} $. That is, $\zeta$ is a compactly supported $C^\infty$ function with the properties
\begin{eqnarray*}
\int_{\mathbb{R}^{3}}\zeta dz=\int_{\mathbb{R}^{3}}\zeta dxdt&=&1,\\
\zeta^{(\varepsilon)}(z)=\frac{1}{\varepsilon^3}\zeta\left(\frac{z}{\varepsilon}\right)&\rightarrow& \delta (z)\ \ \mbox{in the sense of distributions as $\varepsilon\rightarrow 0^+$,}
\end{eqnarray*}
where 
$\delta( z )$
is the Dirac delta function.
Then set
\begin{eqnarray*}
v^{(\varepsilon)}&=&\zeta^{(\varepsilon)}*v,\\
\qe&=&\left(\begin{array}{c}
-(v^{(\varepsilon)})_{x_2}\\
(v^{(\varepsilon)})_{x_1}
\end{array}\right).
\end{eqnarray*}
Outside  $\ot$ the function $v$ is understood to be the extension of $v$  in the sense of the Sobolev extension theorem (\cite{EG}, p.135).
Let
\begin{equation*}
D_\varepsilon=\left(a(|\qe|^2+\varepsilon)^{\frac{1}{2}} +m\right)I+\frac{b-a}{( |\qe|^2+\varepsilon)^{\frac{1}{2}}}\qe\otimes\qe.
\end{equation*}
Obviously, $D_\varepsilon$ is infinitely differentiable for each $\varepsilon>0$. Moreover,
\begin{equation*}
m|\xi|^2\leq D_\varepsilon\xi\cdot\xi=\left(a(|\qe|^2+\varepsilon)^{\frac{1}{2}} +m\right)|\xi|^2 +\frac{b-a}{(|\qe|^2+\varepsilon)^{\frac{1}{2}}}(\xi\cdot\qe)^2\leq\left(b(|\qe|^2+\varepsilon)^{\frac{1}{2}} +m\right)|\xi|^2
\end{equation*}
for each $\xi\in \mathbb{R}^2$.
We form our approximate problems as follows:
\begin{eqnarray}
\Delta v&=&u_{x_1}\ \ \mbox{in $\ot$,}\label{ap1}\\
u_t-\mdiv\left(D_\varepsilon\nabla u\right)&=&-\nabla u\cdot\q\ \ \mbox{in $\ot$,}\label{ap2}\\
v=D_\varepsilon\nabla u\cdot\nu&=&0\ \ \mbox{on $\Sigma_T$,}\label{ap3}\\
u(x,0)&=& u_0(x)\ \ \mbox{on $\Omega$}.\label{ap4}
\end{eqnarray} 
As before, here $\q=(-v_{x_2}, v_{x_1})$.

The existence of a solution to \eqref{ap1}-\eqref{ap4} can be obtained via the
Leray-Schauder fixed point theorem (\cite{GT}, p.280). To see this, we define an operator $\mathbb{T}$ from $L^\infty(\ot)$ into itself as follows: We say $\mathbb{T}(w)=u$ if $u$ is the solution
of the problem
\begin{eqnarray}
u_t-\mdiv\left(D_\varepsilon\nabla u\right)&=&-\nabla u\cdot\q\ \ \mbox{in $\ot$,}\label{ap5}\\
D_\varepsilon\nabla u\cdot\nu&=&0\ \ \mbox{on $\Sigma_T$,}\label{ap6}\\
u(x,0)&=& u_0(x)\ \ \mbox{on $\Omega$}.\label{ap7}
\end{eqnarray}
The functions $\q, \qe$ in the problem are given by first solving
\begin{eqnarray}
\Delta v&=&w_{x_1}\ \ \mbox{in $\ot$,}\label{ap8}\\
v&=&0\ \ \mbox{on $\Sigma_T$}
\end{eqnarray}
and then letting 
\begin{equation*}
\q=\left(\begin{array}{c}
-v_{x_2}\\
 v_{x_1}
\end{array}\right),\ \ \ \qe=\left(\begin{array}{c}
-(v^{(\varepsilon)})_{x_2}\\
(v^{(\varepsilon)})_{x_1}
\end{array}\right).
\end{equation*}
As before, $v^{(\varepsilon)}=\zeta^{(\varepsilon)}*v$.
To see that $\mathbb{T}$ is well-defined, we infer from \eqref{qlpb}
that for each $r>1$ there is a positive number $c$ such that
\begin{eqnarray}\label{vlrb}
\|\nabla v\|_{r,\Omega}\leq c\|w\|_{r,\Omega}\leq c.
\end{eqnarray}
In particular, the number $c$ in the above inequality is independent of $t$. Thus $\q$ is a function in $L^\infty(0,T; (L^{r}(\Omega))^2)$ for each $r>1$.  
Equation \eqref{ap5} is linear and uniformly parabolic in $u$. Classical results assert that there is a unique weak solution $u$ to \eqref{ap5}-\eqref{ap7} in the space
$C[0, T; L^2(\Omega)]\cap L^2(0,T; W^{1,2}(\Omega))$. Furthermore, $u$ is H\"{o}lder continuous on $\overline{\ot}$. Thus we can conclude that $\mathbb{T}$ is continuous and maps bounded sets into precompact ones. It remains to be seen that we have
\begin{equation}\label{uub}
\|u\|_{\infty,\ot}\leq c
\end{equation}
for all $u\in L^\infty(\ot)$ and all $\sigma\in (0, 1)$ satisfying $u=\sigma \mathbb{T}(u)$. This equation is equivalent to the following
\begin{eqnarray*}
\Delta v&=&u_{x_1}\ \ \mbox{in $\ot$,}\label{ap11}\\
u_t-\mdiv\left(D_\varepsilon\nabla u\right)&=&-\nabla u\cdot\q\ \ \mbox{in $\ot$,}\label{ap12}\\
v=D_\varepsilon\nabla u\cdot\nu&=&0\ \ \mbox{on $\Sigma_T$,}\label{ap13}\\
u(x,0)&=& \sigma u_0(x)\ \ \mbox{on $\Omega$}.\label{ap14}
\end{eqnarray*}
We can easily infer \eqref{uub} from Lemma \ref{ump}.

We can employ a bootstrap argument to gain high regularity on the solution $(u, v)$. We begin with
\begin{equation*}
u\in C^{\alpha,\alpha/2}(\overline{\ot})\cap L^2(0,T; W^{1,2}(\Omega)) \ \ \mbox{for some $\alpha\in (0,1)$}.
\end{equation*}
This together with the Calder\'{o}n-Zygmund inequality and \eqref{vlrb} implies
\begin{eqnarray*}
v\in L^\infty(0,T;W^{1,r}(\Omega) )\cap L^2(0,T; W^{2,2}(\Omega))\ \ \mbox{for each $r>1$}.
\end{eqnarray*}
We can write \eqref{ap2} in the form
\begin{equation*}
u_t-\mdiv\left(D_\varepsilon\nabla u\right)=-\mdiv\left( u\q\right)\ \ \mbox{in $\ot$.}
\end{equation*}
Remember that entries of $D_\varepsilon$ are infinitely differentiable. 
By \eqref{r22}, there is a positive number $c$ such that
\begin{equation*}
\|\nabla u\|_{r,\ot}\leq c\|u\q\|_{r,\ot}+c<\infty.
\end{equation*}
Now classical results in (\cite{LSU}, Chap. IV) become applicable. Upon using them
appropriately, we can conclude that $u_t, \Delta u\in L^r(\ot)$ for each $r>1$, from which it follows that $\nabla v\in L^r(0,T; (W^{2,r}(\Omega))^2)$ for each $r>1$. By differentiating \eqref{ap2} with respect to $x_i, i=1,2$, 
 we arrive at $(u_{x_i})_t, \Delta u_{x_i}\in L^r(\ot)$ for each $r>1$. Furthermore, $\q, u_t, \nabla u$ are all H\"{o}lder continuous on $\overline{\ot}$. Thus we can conclude from the Schauder approach (\cite{GT}, Chp. 6) that $u\in L^\infty(0,T, C^2(\overline{\Omega}))$.
 
Now denote the solution to \eqref{ap1}-\eqref{ap4} by $(u_\varepsilon,v_\varepsilon )$. 
Clearly, $(u_\varepsilon,v_\varepsilon )$ is regular enough for us to apply the proof of \eqref{rr1}, thereby obtaining
\begin{equation*}
\|\nabla u_\varepsilon\|_{\infty,\ot}\leq c.
\end{equation*}
By virtue of \eqref{ap1} and \eqref{ap3}, 
  for each $r>1$ there is a positive number $c$ such that
\begin{equation*}
\|v_\varepsilon\|_{L^\infty(0,T; W^{2,r}(\Omega))}\leq c,
\end{equation*}
from whence follows
\begin{equation*}
m|\xi|^2\leq D_\varepsilon\xi\cdot\xi\leq c|\xi|^2\ \ \mbox{for each $\xi\in \mathbb{R}^2$.}
\end{equation*}
This yields that $\{u_\varepsilon\}$ is precompact in $C(\overline{\ot})$. For $\varepsilon_1,\varepsilon_2\in (0,1)$ we can derive from \eqref{ap1} and \eqref{ap3} that
\begin{equation*}
\io|\nabla v_{\varepsilon_1}-\nabla v_{\varepsilon_2}|^2dx=\io(u_{\varepsilon_1}-u_{\varepsilon_2})(v_{\varepsilon_1}-v_{\varepsilon_2})_{x_1}dx\leq c\|u_{\varepsilon_1}-u_{\varepsilon_2}\|_{\infty,\ot}.
\end{equation*}
Subsequently, $\{v_\varepsilon\}$ is precompact in $L^\infty(0, T; W^{1,2}(\Omega))$. This implies the precompactness of $D_\varepsilon$ in  $L^\infty\left(0, T; \left(L^{2}(\Omega)\right)^{2\times 2}\right)$. Now we have enough estimates to justify
passing to the limit in \eqref{ap1}-\eqref{ap4}. The proof is complete.
\end{proof}
\begin{center}
	
	Appendix 
\end{center}
	
	In this appendix, 
	we derive the system satisfied by $(\tilde{u},\tilde{v})$ mentioned earlier in the proof of the boundary estimate.
	To this end, we suppose that our diffeomorphism $\mathbb{B}$ is given as follows: We say
	\begin{equation}\label{app1}
\mathbb{B}\left(\begin{array}{c}
x_1\\
x_2
\end{array}\right)=\left(\begin{array}{c}
\eta_1\\
\eta_2
\end{array}\right)\ \ \mbox{if }\ \ \left(\begin{array}{c}
\eta_1\\
\eta_2
\end{array}\right)=\left(\begin{array}{c}
g_1(x_1,x_2)\\
g_2(x_1,x_2)
\end{array}\right)\equiv\mathbf{g},
	\end{equation}
	where $g_1, g_2$ are two $ C^{2, \gamma}$ functions, where $\gamma\in (0,1)$, defined in a neighborhood $O$ of a boundary point $x_0=(x_1^0, x_2^0 )$ with the property
		\begin{equation*}
	g_1(x_1,x_2)=0 \ \ \ \mbox{for $(x_1,x_2)\in O\cap \po$.}
	\end{equation*}
	Set $\eta_2^0=g_2(x_1^0,x_2^0)$. Assume that $\mathbb{B}$ maps $O\cap\Omega$ 
	onto a rectangular region $B^+\equiv [0, \delta)\times(\eta_2^0-\delta,\eta_2^0+\delta )$ for some $\delta>0$.
	 Denote by $J_{\mathbf{g}}$ the Jacobian matrix of $\mathbb{B}$, i.e.,  
		\begin{equation*}
	J_{\mathbf{g}}=\nabla\left(\begin{array}{c}
	g_1\\
	g_2
	\end{array}\right)=\left(\begin{array}{ll}
(g_1)_{x_1}&(g_2)_{x_1}\\
(g_1)_{x_2}&(g_2)_{x_2}
	\end{array}\right)
	\end{equation*}
In light of \cite{SW}, we may assume that
	\begin{equation}
c_1\geq	\mdet J_{\mathbf{g}}\geq c_0\ \ \mbox{on $O$ for some $c_1\geq c_0>0$}.
	\end{equation}
By the inverse function theorem, we can solve $x_1, x_2$ from the second equation in \eqref{app1} to obtain
	\begin{equation}
\left(\begin{array}{c}
x_1\\
x_2
\end{array}\right)=\mathbb{B}^{-1}\left(\begin{array}{c}
\eta_1\\
\eta_2
\end{array}\right)=\left(\begin{array}{c}
f_1(\eta_1,\eta_2)\\
f_2(\eta_1,\eta_2)
\end{array}\right)\equiv\mathbf{f}.
	\end{equation}
	Subsequently,
	\begin{eqnarray}
	g_1(f_1(\eta_1,\eta_2),f_2(\eta_1,\eta_2))&=&\eta_1,\label{app2}\\	g_2(f_1(\eta_1,\eta_2),f_2(\eta_1,\eta_2))&=&\eta_2.\label{app5}
	\end{eqnarray}
	Differentiating \eqref{app2} with respect to $\eta_1, \eta_2$, respectively, yields
	\begin{eqnarray*}
		(g_1)_{x_1}	(	f_1)_{\eta_1}+	(	g_1)_{x_2}	(	f_2)_{\eta_1}&=&1,\\
	(	g_1)_{x_1}	(	f_1)_{\eta_2}+	(	g_1)_{x_2}	(	f_2)_{\eta_2}&=&0.
		\end{eqnarray*}
Do the same to \eqref{app5} to get
		\begin{eqnarray*}
	(	g_2)_{x_1}	(	f_1)_{\eta_1}+	(	g_2)_{x_2}	(	f_2)_{\eta_1}&=&0,\\
	(	g_2)_{x_1}	(	f_1)_{\eta_2}+	(	g_2)_{x_2}	(	f_2)_{\eta_2}&=&1.
	\end{eqnarray*}
	Hence,
	\begin{equation}\label{app3}
	J_{\mathbf{f}}J_{\mathbf{g}}\circ \mathbf{f}=I	.
	\end{equation}
	Remember that
	\begin{equation*}
	(u(x_1,x_2), v(x_1,x_2))=(\tilde{u}(g_1(x_1,x_2), g_2(x_1,x_2)),\tilde{v}(g_1(x_1,x_2), g_2(x_1,x_2))).
	\end{equation*} We compute
	\begin{eqnarray*}
	u_{x_1}&=&\tilde{u}_{\eta_1}(g_1)_{x_1}+\tilde{u}_{\eta_2}(g_2)_{x_1},\\
	u_{x_2}&=&\tilde{u}_{\eta_1}(g_1)_{x_2}+\tilde{u}_{\eta_2}(g_2)_{x_2}.
	\end{eqnarray*}
That is,
\begin{equation*}
\nabla u=J_{\mathbf{g}}\nabla \tilde{u}.
\end{equation*}
Similarly,
\begin{equation}\label{fr1}
\nabla v=J_{\mathbf{g}}\nabla \tilde{v}\equiv\left(\begin{array}{c}
\beta_1\\
\beta_2
\end{array}\right).
\end{equation}
Recall that
\begin{eqnarray*}
\q&=&\left(\begin{array}{c}
-v_{x_2}\\
v_{x_1}
\end{array}\right)=\left(\begin{array}{c}
-\beta_2\\
\beta_1
\end{array}\right)\equiv \tilde{\mathbf{q}},\\
D&=&(a|\mathbf{q}|+m)I+(b-a)\frac{\q\otimes\q}{|\q|}\nonumber\\
&=&(a|\tilde{\mathbf{q}}|+m)I+(b-a)\frac{\tilde{\mathbf{q}}\otimes\tilde{\mathbf{q}}}{|\tilde{\mathbf{q}}|}\equiv\tilde{D}.
\end{eqnarray*}
Set
\begin{eqnarray*}
	h_1&=&(g_1)_{x_2x_1}(f_1)_{\eta_1}+(g_1)_{x_2x_2}(f_2)_{\eta_1}+(g_2)_{x_2x_1}(f_1)_{\eta_2}+(g_2)_{x_2x_2}(f_2)_{\eta_2},\\
	h_2&=&-\left((g_1)_{x_1x_1}(f_1)_{\eta_1}+(g_1)_{x_1x_2}(f_2)_{\eta_1}+(g_2)_{x_1x_1}(f_1)_{\eta_2}+(g_2)_{x_1x_2}(f_2)_{\eta_2}\right).
\end{eqnarray*}
By our earlier assumptions, we have
\begin{equation}\label{bc}
h_1,\ h_2\in L^\infty(O). 
\end{equation}
We are ready to derive the system satisfied by $(\tilde{u},\tilde{v})$. First we have
\begin{eqnarray}
\Delta v&=&(\beta_1)_{\eta_1}(g_1)_{x_1}+(\beta_1)_{\eta_2}(g_2)_{x_1}+(\beta_2)_{\eta_1}(g_1)_{x_2}+(\beta_2)_{\eta_2}(g_2)_{x_2}\nonumber\\
&=&\left(\beta_1(g_1)_{x_1}+\beta_2(g_1)_{x_2}\right)_{\eta_1}+\left(\beta_1(g_2)_{x_1}+\beta_2(g_2)_{x_2}\right)_{\eta_2}\nonumber\\
&&-\beta_1\left((g_1)_{x_1x_1}(f_1)_{\eta_1}+(g_1)_{x_1x_2}(f_2)_{\eta_1}+(g_2)_{x_1x_1}(f_1)_{\eta_2}+(g_2)_{x_1x_2}(f_2)_{\eta_2}\right)\nonumber\\
&&-\beta_2\left((g_1)_{x_2x_1}(f_1)_{\eta_1}+(g_1)_{x_2x_2}(f_2)_{\eta_1}+(g_2)_{x_2x_1}(f_1)_{\eta_2}+(g_2)_{x_2x_2}(f_2)_{\eta_2}\right)\nonumber\\
&=&\mdiv\left(J_{\mathbf{g}}^T\left(\begin{array}{c}
\beta_1\\
\beta_2
\end{array}\right)\right)+(h_1,h_2)\left(\begin{array}{c}
-\beta_2\\
\beta_1
\end{array}\right)\nonumber\\
&=&\mdiv\left(J_{\mathbf{g}}^TJ_{\mathbf{g}}\nabla \tilde{v}\right)+(h_1,h_2)\tilde{\mathbf{q}}.\label{app4}
\end{eqnarray}
Then  $\tilde{v}$ satisfies the equation
\begin{equation}\label{app6}
\mdiv\left(J_{\mathbf{g}}^TJ_{\mathbf{g}}\nabla \tilde{v}\right)+(h_1,h_2)\tilde{\mathbf{q}}=\tilde{u}_{\eta_1}(g_1)_{x_1}+\tilde{u}_{\eta_2}(g_2)_{x_1}\ \ \mbox{in $B^+$ }
\end{equation}
for  $t\in (0,T)$. Obviously, we must compose the coefficients in the above equation with $\mathbf{f}$, i.e., $J_{\mathbf{g}}^TJ_{\mathbf{g}}=J_{\mathbf{g}}^TJ_{\mathbf{g}}\circ\mathbf{f}$, and so on. The same is understood in the subsequent equation for $\tilde{u}$. It is not difficult for us to see from \eqref{fr1}, \eqref{app4}, and the change of variables formula for the Lebesgue integrals (\cite{EG}, p.99) that 
\begin{equation*}
v\in W^{2,p}(O\cap\Omega)\ \ \mbox{if and only if}\ \ \tilde{v}\in W^{2,p}(B^+)
\end{equation*}
and $\|v\|_{ W^{2,p}(O\cap\Omega)}$ and $\|\tilde{v}\|_{W^{2,p}(B^+)}$ are comparable, where $p\geq 1$. In this sense, all the global estimates such as those in \eqref{fr} we have derived for $v$ (resp. $u$) are still valid for $\tilde{v}$ (resp. $\tilde{u}$). Thus we do not need \eqref{app6} to get estimates for $\tilde{v}$.

To obtain the equation for $\tilde{u}$, we calculate
\begin{eqnarray*}
D\nabla u&= &\left[(a|\mathbf{q}|+m)I+(b-a)\frac{\q\otimes\q}{|\q|} \right]\nabla u\nonumber\\
&=&\tilde{D}J_{\mathbf{g}}\nabla \tilde{u}.
\end{eqnarray*}
In view of \eqref{app4}, we have
\begin{eqnarray*}
\mdiv\left(D\nabla u\right)
&=&\mdiv\left(J_{\mathbf{g}}^T\tilde{D}J_{\mathbf{g}}\nabla \tilde{u}\right)+(h_2,-h_1)\tilde{D}J_{\mathbf{g}}\nabla \tilde{u}.
\end{eqnarray*}
Finally, we arrive at
\begin{equation}\label{eut}
\tilde{u}_t-J_{\mathbf{g}}^T\tilde{D}J_{\mathbf{g}}:\nabla^2 \tilde{u}=\mdiv\left(J_{\mathbf{g}}^T\tilde{D}J_{\mathbf{g}}\right)\nabla\tilde{u}+(h_2,-h_1)\tilde{D}J_{\mathbf{g}}\nabla \tilde{u} -J_{\mathbf{g}}\nabla \tilde{u}\cdot\tilde{\mathbf{q}}\equiv\tilde{w}\ \mbox{in $B^+\times(0,T)$.}
\end{equation}
We can extend the above equation across the line $\eta_1=0$ appropriately 
so that the resulting equation is satisfied in  $(-\delta, \delta)\times(\eta_2^0-\delta,\eta_2^0+\delta )\times(0,T)$. It suffices for us to do the even extension for $\tilde{u}, J_{\mathbf{g}}^T\tilde{D}J_{\mathbf{g}}, \tilde{w}$, respectively.  Remember that the even extension of a Sobolev function is still a Sobolev function. Thus the coefficients in the resulting equation are regular enough for all  our purposes.

\end{document}